\documentclass[hidelinks]{article}
\usepackage{graphicx} 
\usepackage{hyperref}
\usepackage{amsthm}
\usepackage{geometry}
\usepackage{amssymb}
\usepackage{amsmath}
\usepackage{enumitem}
\usepackage{color}
\definecolor{purple}{rgb}{0.5,0,0.5}
\usepackage{lmodern}
\usepackage[english]{babel}

\newtheorem{theorem}{Theorem}[section]
\newtheorem{lemma}[theorem]{Lemma}

\newtheorem{claim}[theorem]{Claim}

\theoremstyle{definition}

\newcommand{\claimproofstart}[1][Proof]{\begin{proof}[#1]
\renewcommand{\qedsymbol}{$\boxdot$}}
\newcommand\claimproofend{
\end{proof}
\renewcommand{\qedsymbol}{$\square$}}

\newcommand{\cA}{\mathcal{A}}
\newcommand{\cC}{\mathcal{C}}

\newcommand{\cH}{\mathcal{H}}
\newcommand{\cQ}{\mathcal{Q}}
\newcommand{\cG}{\mathcal{G}}
\newcommand{\cF}{\mathcal{F}}
\newcommand{\cE}{\mathcal{E}}
\newcommand{\cP}{\mathcal{P}}

\newcommand{\Var}{\mathrm{Var}}

\title{Simplified dense model theorem for random discrete structures}

\author{Rajko Nenadov \footnote{School of Mathematics and Statistics, University of Canterbury, New Zealand. Research supported by the New Zealand Marsden Fund. Email: rajko.nenadov@canterbury.ac.nz}} 
\date{}

\begin{document}

\maketitle

\begin{abstract}
    Dense model theorems of Conlon and Gowers associate to each subset of a sparse random structure a dense set with approximately the same size and configuration counts after rescaling. This allows dense extremal results to be transferred to the random setting. Allen, B\"ottcher, Lada, and Mergoni Cecchelli recently generalised this approach and established a dense model theorem in an abstract hypergraph framework, with hypotheses that are readily verified in many applications. Both proofs use the separating hyperplane theorem to reduce the existence of dense models to an anti-correlation estimate. We give a simplified proof of a version of this theorem, inspired by the method of hypergraph containers. Our proof constructs dense models from small fingerprints, combining sampling with the R\"odl--Ruci\'nski deletion method.
\end{abstract}

\section{Introduction}

Transferring extremal results to the sparse random setting is a central topic in probabilistic combinatorics. The classical Erd\H{o}s--Stone theorem \cite{erdos46turan}, extending Tur\'an's theorem, states that, for every fixed graph $H$ with at least one edge and every $\varepsilon>0$, if $G \subseteq K_n$ satisfies
$$
    e(G) \ge \left(1 - \frac{1}{\chi(H) - 1} + \varepsilon \right) e(K_n),
$$
and $n$ is sufficiently large, then $G$ contains $H$. A natural random graph analogue asks for which values of $p$ the graph $\Gamma \sim G(n,p)$ has the same relative property with high probability, namely that every $G \subseteq \Gamma$ with
$$
    e(G) \ge \left(1 - \frac{1}{\chi(H) - 1} + \varepsilon \right) e(\Gamma)
$$
contains $H$. Such a property is usually referred to as the \emph{relative Tur\'an property}.

This and related extremal and Ramsey questions have shaped the development of probabilistic combinatorics. A first major breakthrough was due to R\"odl and Ruci\'nski \cite{rodl1995threshold}, who determined thresholds for Ramsey properties of random graphs. Friedgut, R\"odl, and Schacht \cite{friedgut10random} subsequently proved a general result for Ramsey properties of random hypergraphs and other random discrete structures. Random analogues of the Erd\H{o}s--Stone theorem were established by Conlon and Gowers \cite{conlon16turan} and Schacht \cite{schacht16turan}, using substantially different methods. A third approach for tackling an even wider range of problems, the \emph{hypergraph containers} method, was developed by Balogh, Morris, and Samotij \cite{balogh2015independent} and, independently, by Saxton and Thomason \cite{saxton2015hypergraph}. All these methods aim to \emph{transfer} an existing extremal result for dense structures to sparse random ones, using the dense result as a black box. A related approach uses Szemer\'edi's regularity lemma and the sparse counting result, the celebrated \emph{K\L R conjecture} \cite{conlon2014klr,kohayakawa1997k,nenadov22klr} (first proved using the aforementioned hypergraph containers \cite{balogh2015independent,saxton2015hypergraph}), allowing one to mimic regularity-based proofs in subgraphs of random graphs. Let us say a few words about the three transference approaches.

\begin{itemize}
    \item The approach of R\"odl and Ruci\'nski \cite{rodl1995threshold}, later refined by Schacht \cite{schacht16turan}, builds copies of $H$ inductively on the number of edges. The dense result guarantees sufficiently many potential extensions, while second-moment bounds control their distribution. To obtain sufficiently strong probability estimates for the required union bounds, R\"odl and Ruci\'nski developed the \emph{deletion method}, which also plays an important role in our argument. 
    
    \item The \emph{dense-model} approach of Conlon and Gowers \cite{conlon16turan} is based on the work of Green and Tao \cite{green08primes}. For a fixed graph $H$ and sufficiently large $p$, they show that $\Gamma \sim G(n,p)$ typically has the following property: for every $G \subseteq \Gamma$, there exists $D \subseteq K_n$ such that $e(G) \approx e(D)p$ and $N_H(G) \approx N_H(D)p^{e(H)}$, where $N_H(G)$ counts copies of $H$ in $G$. Here the approximation errors are additive, measured on the scales $e(K_n) p$ and $N_H(K_n) p^{e(H)}$, respectively. Such a graph $D$ is called a \emph{dense model} of $G$. Applying the dense supersaturation theorem to $D$ then gives the relative Tur\'an property. 
    
    The proof first introduces a norm designed to control discrepancies in edge and $H$-copy counts. This norm is defined using functions that encode normalised counts of extensions of an edge to a copy of $H$. The separating hyperplane theorem, together with polynomial approximation, reduces the existence of a dense model to showing that products of a bounded number of these functions have small correlation with the difference between the rescaled random graph and the complete graph (for a detailed description of the argument, see \cite[Section 4]{allen26dense}).

    The approach of Conlon and Gowers has recently been generalised by Allen, B\"ottcher, Lada, and Mergoni Cecchelli \cite{allen26dense}. In particular, \cite[Theorem 2.4]{allen26dense} gives a general dense-model framework in the language of hypergraphs.

    \item The \emph{container} approach constructs a small collection $\cC$ of subgraphs of $K_n$ such that every $H$-free graph $I \subseteq K_n$ is contained in some $C \in \cC$, and every $C \in \cC$ contains few copies of $H$. Supersaturation bounds the number of edges in each container, and a union bound, which crucially relies on $\cC$ being small, bounds the number of edges in any $H$-free subgraph of $G(n,p)$. Such a family $\cC$ is constructed as follows: Given an $H$-free $I \subseteq K_n$, one first extracts a small representative subgraph $F \subseteq I$, called a \emph{fingerprint}, and constructs $C$ based on the fact that any edge of $K_n$ that would complete a copy of $H$ using edges of $F$ cannot belong to $I$. The size of the fingerprints controls the number of possible containers. Their containment in $I$ is crucial for the union bound in the random setting.
\end{itemize}

Each of the three approaches has its advantages. In particular, dense model theorems allow one to transfer extremal inequalities, such as those given by the clique density theorem \cite{reiher16clique}. The three approaches use substantially different proof mechanisms, so it is natural to ask whether they share a common underlying idea. We connect them by constructing dense models from small fingerprints, using second-moment bounds to control the distribution of hyperedges. More precisely, building on the author's probabilistic version of the hypergraph container theorem \cite{nenadov24prob}, we combine a sampling-based fingerprint construction with the R\"odl--Ruci\'nski deletion method to give an alternative proof of a version of \cite[Theorem 2.4]{allen26dense}. The fingerprint method thus extends from controlling independent sets to preserving configuration counts and intersections with prescribed sets.

\paragraph{Hypergraph setup.}
The hypergraph container framework encodes a problem as one about independent sets in an appropriately defined hypergraph. The main idea is to cover all independent sets by a small family of almost independent sets. We use the same idea, but instead of covering independent sets, we construct dense models that work for any given subset.

For a finite set $V$, let $V_p$ denote the random subset obtained by including each vertex independently with probability $p$. For a hypergraph $\cH$ and $S\subseteq V(\cH)$, let $\cH[S]$ be the induced hypergraph and write $e_\cH(S)=e(\cH[S])$. Given $q \in (0, 1]$, set
$$
    \mu_\cH(q) = \sum_{h \in \cH} q^{|h|} \quad \text{ and } \quad \Delta_\cH(q) = \sum_{\substack{h, h' \in \cH \\ h \cap h' \neq \emptyset}} q^{|h \cup h'|},
$$
where the second sum is over ordered pairs $(h,h')$, including the diagonal pairs $h=h'$. 

We prove the following version of \cite[Theorem 2.4]{allen26dense}.

\begin{theorem} \label{thm:main}
    For every integer $k \ge 2$, $\varepsilon > 0$ and $K \ge 1$ there exist $c,\delta > 0$ and $T > 1$ such that the following holds. Let $\cH$ be a $k$-uniform hypergraph with $N \ge 1$ vertices, and suppose $q \in [1/N, 1]$ is such that
    \begin{equation} \label{eq:Delta_hypothesis}
        \Delta_\cH(q) \le K \frac{\mu_\cH(q)^2}{N q}.
    \end{equation}
    Let $p \in (Tq, 1]$, and let $\cA$ be a family of subsets of $V(\cH)$ with $|\cA| \le 2^{\delta Np}$. For $X\sim V(\cH)_p$, with probability at least 
    $$
        1-e^{-cNp}
    $$
    the following holds: For every $Y \subseteq X' \subseteq X$ with $|X'| \ge (1 - \delta)Np$ and $e_\cH(X') \le ( 1 + \varepsilon/4) \mu_\cH(p)$, there exists $C\subseteq V(\cH)$ such that
    $$
        \Bigl| |Y \cap A| - |C \cap A|p \Bigr| \le \varepsilon Np \quad \text{ for every $A \in \cA$}
    $$
    and
    $$
        \Bigl| e_\cH(Y) - e_\cH(C) p^k \Bigr| \le \varepsilon e(\cH) p^k.
    $$ 
\end{theorem}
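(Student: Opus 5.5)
We construct $C$ from a small random fingerprint of $Y$, in the spirit of the probabilistic container theorem \cite{nenadov24prob}. Set $s = Tq/p$ and write $m$ for a large constant depending on $k,\varepsilon,K$. Given $Y$, sample independent $s$-random subsets $F_1,\dots,F_m \subseteq Y$; each $F_i$ has size about $s|Y|\le sNp\approx TqN$. Define $C$ by a rule depending only on the fingerprint $F=(F_1,\dots,F_m)$. The natural choice is this: put a vertex $v$ into $C$ if the normalised number of ways $v$ extends, together with vertices of $F_i$, to hyperedges of $\cH$ agrees with the corresponding count in $V(\cH)$ rescaled by $(sp)^{k-1}$. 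Finally, intersect this set with the sets $A \in \cA$ at density read off from $|F_i\cap A|/s$; a random $C$ with the prescribed marginals works, derandomised by averaging.

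The key steps are as follows.

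(1) \emph{Counting in the fingerprint.} A fingerprint-based $C$ determines its $A$-intersections and its edge count up to small error, provided $|F\cap A|\approx s|Y\cap A|$ and $e_\cH$ restricted to $F$ is approximately $s^k e_\cH(Y)$. Linear statistics follow from Chernoff bounds over the choice of $F\subseteq Y$. Edge counts follow from Janson/second moment, using that $Y\subseteq X'$ has $e_\cH(X')\le(1+\varepsilon/4)\mu_\cH(p)$ and that $\Delta$ restricted to $X$ is controlled.

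(2) \emph{Transfer from the fingerprint to $C$.} Show that for a fixed fingerprint $F$, with $D=C(F)$, the random set $V(\cH)_{sp}$ conditioned near $F$ behaves like $F$. Equivalently, show that $C$ built from $F$ satisfies $e_\cH(C)(sp)^k\approx e_\cH(F)$ whenever $F$ is typical. This is where hypothesis \eqref{eq:Delta_hypothesis} enters: at density $q'=sp=Tq\ge q$ the ratio $\Delta/\mu^2$ scales favourably, giving concentration of $e_\cH(V(\cH)_{q'})$ with variance $O(\mu^2/(NTq))$.

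(3) \emph{Union bound over fingerprints.} The number of fingerprints is at most $\binom{N}{\le mTqN}\le \exp(O(mTq\log(1/(Tq))N))$, which is smaller than $e^{cNp}$ once $p>Tq$ and $T$ is large relative to the logarithmic factor. Here one may need to take $s$ a small constant rather than $Tq/p$, so that the fingerprint size is $\eta Np$ with $\eta\log(1/\eta)\ll c$. For each fixed fingerprint $F$ and each $A\in\cA$, the event that $F\subseteq X$ yet $X$ is atypical relative to $F$ has probability $p^{|F|}e^{-\Omega(Np)}$. Summing gives failure probability $e^{-cNp}$, using $|\cA|\le 2^{\delta Np}$.

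The main obstacle is the upper-tail/deletion step. Edge counts in $X$ do not have exponential concentration, so we cannot union bound naively. Here we apply the R\"odl--Ruci\'nski deletion method: with probability $1-e^{-cNp}$, one can delete $\delta Np$ vertices of $X$ so that the remainder has no excess of intersecting hyperedge pairs. This is exactly why the theorem allows $X'$ with $|X'|\ge(1-\delta)Np$ and assumes $e_\cH(X')\le(1+\varepsilon/4)\mu_\cH(p)$. These hypotheses supply the upper bound, and a lower-tail Janson inequality (exponential in $\mu^2/\Delta\ge Nq/K$, boosted to $Np$ by sampling at rate $s$) supplies the matching lower bound needed in step (2).
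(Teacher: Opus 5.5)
Your outline has the right outer shell: fingerprints contained in $X$, a union bound weighted by $p^{|F|}$, R\"odl--Ruci\'nski deletion, and Janson's lower tail combined with the hypothesis on $e_\cH(X')$. What is missing is the heart of the proof. You need a rule $F\mapsto C$ together with a \emph{deterministic} guarantee that, for every $Y\subseteq X$, some small fingerprint yields a set $C$ whose trace $C\cap X$ reproduces both $|Y\cap A|$ for every $A\in\cA$ and $e_\cH(Y)$.

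Your step (2) asks for $e_\cH(C)(sp)^k\approx e_\cH(F)$ with $C$ built from $F$. But $F$ is itself an arbitrary-looking subset of a random set of density $sp$, so this is the same dense-model problem at a lower density. Concentration of $e_\cH(V(\cH)_{sp})$ says nothing about it.

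The rule you propose, thresholding each vertex's link count into $F_i$, is a container-type rule. It can certify that a vertex should be excluded, but it cannot reproduce densities. For instance, if $Y$ is a uniformly random half of $X$, every link count is about $2^{1-k}$ times the rescaled one. The rule then returns $\emptyset$ or everything, whereas the correct model is something like a random half of $V(\cH)$.

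Likewise, ``intersecting with the sets $A$ at densities $|F_i\cap A|/s$'' is not well defined when up to $2^{\delta Np}$ overlapping sets must be matched simultaneously. Producing one set that matches all these marginals and, at the same time, the degree-$k$ edge count is exactly what has to be constructed; averaging over a random $C$ does not supply it.

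The paper closes this gap in three steps.
\begin{enumerate}
    \item It passes to the transversal subhypergraph $\cG$ of a fixed $t$-partition, losing at most $\beta e(\cH)$ edges. It then builds $C$ one part at a time in descending order, telescoping through the mixed counts $e^{(i)}_\cG(Y,C_i\cap X)$. Each step only has to match the \emph{linear} statistic $\phi_i(Y_i)$, where $\phi_i(v)$ counts edges through $v$ whose earlier parts lie in $Y$ and whose later parts lie in $C_{i+1}\cap X$.
    \item It replaces $\phi_i$ by $L\phi_i'$, computed from $\theta$-samples $S_i\subseteq Y$ and $T_i\subseteq X$, at $\ell_1$-cost $4L\sqrt{|X|D_\cG(X)}$. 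This is where deletion is used. The quantity to be made small is $D$, the number of pairs sharing at least two vertices (mean at most $K\Delta_\cH(p)/T$), not intersecting pairs in general.
    \item Lemma~\ref{lemma:approx_full} matches a linear statistic and all of $\cA$ at once. It sorts $X$ by $\phi$ and cuts it into blocks of size $s$; the cut points are vertices of $X$, so the record contains vertices of $X$ and not only of $Y$. It refines by at most $h$ members of $\cA$ using an energy increment, and keeps each part according to one uniformly sampled point. Then $C$ is the union of the corresponding parts of the induced partition of $V(\cH)$, so $C\cap X$ is exactly the sampled approximation of $Y$.
\end{enumerate}
Only then does a union bound like yours give $|C\cap X\cap A|\approx p|C\cap A|$ and the Janson lower tails for $\cG[C]$ and $\cG\setminus\cG[C]$. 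The upper bound on $e_\cH(Y)$ comes from the hypothesis on the given set combined with the lower tail for the complement $\cG\setminus\cG[C]$.

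Two smaller points. With $s=Tq/p$, the fingerprint entropy $TqN\log(ep/(Tq))$ is not beaten by $e^{-cNp}$ when $p=O(Tq)$, so $s$ must indeed be a small constant, as you suspected. Also, Janson needs no boosting: \eqref{eq:Delta_hypothesis} and $p\ge q$ already give $\mu_\cH(p)^2/\Delta_\cH(p)\ge Np/K$, while sampling at a lower rate would only weaken the exponent.
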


The moment bound \eqref{eq:Delta_hypothesis} has a natural necessary consequence. Since the diagonal terms give $\Delta_\cH(q)\ge\mu_\cH(q)$, it implies $\mu_\cH(q)\ge Nq/K$ whenever $e(\cH)>0$, and hence $\mu_\cH(p)\ge Np/K$ for $p\ge q$. If instead $\mu_\cH(p)=o(Np)$ and $Np\to\infty$, then typically one could delete one vertex from each edge of $\cH[X]$ to obtain an independent set $Y\subseteq X$ of size $(1-o(1))Np$. Its dense model would have to be nearly independent and contain almost all the vertices, which need not be possible.

The upper bound on $e_{\cH}(X')$ is necessary up to the error constant: taking $Y=X'$ would force $e_{\cH}(X')\le (1+\varepsilon)\mu_{\cH}(p)$. We remark that Lemma \ref{lemma:rodl_rucinski} and Chernoff’s inequality show that a set $X'$ satisfying both hypotheses can be obtained by deleting at most $\delta Np/2$ vertices from $X$, with failure probability $\exp{-\Omega(Np)}$. If $Np\to\infty$, the bound \eqref{eq:Delta_hypothesis} and Chebyshev’s inequality allow us to take $X'=X$ with high probability.

To understand the implications of Theorem \ref{thm:main}, a reader might consider the case $\cA = \{V(\cH)\}$, which suffices for many applications. The freedom to further control how $Y$ and $C$ intersect certain subsets of $V(\cH)$ allows one to preserve additional structure. An application where this is crucial is given in Section~\ref{sec:example}. Let us now make a connection between Theorem \ref{thm:main} and the aforementioned dense model theorem of Conlon and Gowers. Let $H$ be a fixed graph with at least two edges and no isolated vertices, and consider the hypergraph $\cH$ where vertices correspond to edges of $K_n$, the complete graph with $n$ vertices (hence $N = \binom{n}{2}$), and a set of $e(H)$ vertices forms a hyperedge if and only if the corresponding edges in $K_n$ form a copy of $H$. Suppose $p$ is suitably large. The random set $X \sim V(\cH)_p$ corresponds to $\Gamma \sim G(n,p)$ and $e_\cH(X)$ counts the number of copies of $H$ in $\Gamma$. The quantity $\mu_\cH(p)$ then denotes the expected number of copies of $H$. On the typical event that $|X|\ge(1-\delta)Np$ and $e_\cH(X)\le(1+\varepsilon/4)\mu_\cH(p)$, for any $G \subseteq \Gamma$ and the associated set $Y \subseteq X$, Theorem \ref{thm:main} provides us with a set $C \subseteq V(\cH)$ such that $|Y| \approx |C| p$ and $e_\cH(Y) \approx e_\cH(C) p^{e(H)}$, that is, $e(G) \approx e(D) p$ and $N_H(G) \approx N_H(D) p^{e(H)}$, where $D \subseteq K_n$ is the graph corresponding to the vertices in $C$. The graph $D$ is thus a dense model of $G$.

\paragraph{Main idea.} Suppose, for simplicity, that $X$ itself satisfies the hypotheses imposed on $X'$. Given $Y\subseteq X$, we seek a set $C\subseteq V(\cH)$ such that $|Y|\approx|C\cap X|$ and $e_\cH(Y)\approx e_\cH(C\cap X)$. For a fixed $C$, chosen before sampling $X$, Chernoff's inequality gives $|C\cap X|\approx p|C|$. Janson's inequality gives lower bounds for the counts in both $\cH[C]$ and $\cH\setminus\cH[C]$. Together with an upper bound on $e_\cH(X)$, these yield the required two-sided approximation to $e_\cH(C) p^k$. Under the $\Delta_\cH(q)$ assumption, the Chernoff and Janson failures have probability $\exp\{-\Omega(Np)\}$, so a union bound permits a family of $2^{o(Np)}$ possible models fixed before sampling $X$. In the proof, we implement this idea through small fingerprints, conditioning on their containment in $X$ and accounting for the probability of that containment.

Unlike most hypergraph container arguments, which apply to every independent set, our argument requires $\cH[X]$ to have well-distributed edges, as quantified in Section \ref{sec:deletion}. Using the structure of $\cH[X]$, we show how to construct $C$ by examining small random samples of $Y$ and $X$. Both of these ideas -- restricting to sets with some special structure and using sampling to construct the container -- have been used in \cite{nenadov24prob}. What is different here is what we try to achieve with $C$.

A further simplification is to pass to a transversal $t$-partite subhypergraph $\cG\subseteq\cH$, where $t$ is sufficiently large in terms of the error parameter. This allows us to construct the model one part at a time.

\paragraph{Structure of the paper.} In Section \ref{sec:deletion}, we use the R\"odl--Ruci\'nski deletion method to show that a typical set $X \sim V(\cH)_p$ contains a well-structured large subset (Lemma \ref{lemma:deletion}). The need to pass to a subset of $X$ is due to the lack of general upper-tail estimates. Section \ref{sec:fingerprint} contains the main fingerprint-model result (Lemma \ref{lemma:partite}); its overlap-dependent error bound becomes useful after applying the deletion lemma. We combine these results to prove Theorem \ref{thm:main} in Section \ref{sec:proof_main}. Finally, Section \ref{sec:example} illustrates the theorem by deriving the two-sided counting lemma for subgraphs of random graphs. 

\paragraph{Use of AI.} Initial prompts to ChatGPT Astra were based on combining ideas from \cite{nenadov24prob} and the currently unpublished paper of the author on the strong counting K\L R conjecture, to prove a statement along the lines of \cite[Theorem 2.4]{allen26dense}. Astra then suggested that an appropriate notion of a ``container'' in the case of dense model theorems for random discrete structures is a set $C$ such that $|Y| \approx |C \cap X|$ and $e_\cH(Y) \approx e_\cH(C \cap X)$. AI assistance was also used in writing the exposition, particularly in setting up the constants. The author takes responsibility for the mathematical claims and references.

\section{Deletion lemma}
\label{sec:deletion}

The following lemma is an application of the R\"odl--Ruci\'nski deletion method.

\begin{lemma} \label{lemma:rodl_rucinski}
    Let $(S_a)_{a \in A}$ be a finite labelled family of nonempty subsets of $V$, each of size at most $r\ge1$. For every integer $d \ge 0$ and $B > 1$, the random set $X \sim V_p$ satisfies
    $$
        \Pr\bigl(Z(X \setminus R) > B \; \mathbb{E} Z(X) \text{ for every } R \subseteq X,\ |R| \le d\bigr) \le B^{-(\lfloor d/r\rfloor+1)},
    $$
    where $Z(U)=|\{a:S_a\subseteq U\}|$ for $U\subseteq V$.
\end{lemma}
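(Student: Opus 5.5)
We follow the standard Rödl--Ruciński argument: if no small deletion brings $Z$ down to $B\,\mathbb{E}Z(X)$, then $X$ must contain many pairwise disjoint sets $S_a$. The probability of that is then bounded by a moment computation.

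Set $\mu=\mathbb{E}Z(X)=\sum_a p^{|S_a|}$ and $m=\lfloor d/r\rfloor+1$. Consider the bad event $\cE$ that $Z(X\setminus R)>B\mu$ for every $R\subseteq X$ with $|R|\le d$.

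\emph{Step 1 (combinatorial reduction).} On $\cE$, we show that $X$ contains $m$ pairwise disjoint sets $S_{a_1},\dots,S_{a_m}$. Take a maximal family of pairwise disjoint sets $S_a\subseteq X$, and suppose it has at most $m-1$ members. Let $R$ be the union of its members. Then $|R|\le (m-1)r\le d$. By maximality, every $S_a\subseteq X$ meets $R$, so $Z(X\setminus R)=0\le B\mu$, which contradicts $\cE$. (If $\mu=0$ then $Z(X)=0$ almost surely and $\cE$ has probability $0$, so we may assume $\mu>0$.)

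This alone is not quite enough; we need a weighted count. Write $Z_m(X)$ for the number of ordered $m$-tuples $(a_1,\dots,a_m)$ of labels whose sets $S_{a_i}$ are pairwise disjoint and all contained in $X$. Sharpen the reduction greedily. Having chosen disjoint $S_{a_1},\dots,S_{a_{j}}\subseteq X$ with $j\le m-1$, put $R_j=S_{a_1}\cup\dots\cup S_{a_j}$, so $|R_j|\le d$. By $\cE$, there are more than $B\mu$ labels $a$ with $S_a\subseteq X\setminus R_j$, and each such $S_a$ is disjoint from all previous choices. Hence on $\cE$ we get $Z_m(X)>(B\mu)^m$.

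\emph{Step 2 (moment bound).} Because the sets in each counted tuple are disjoint, independence gives
$$
\mathbb{E}Z_m(X)=\sum_{\text{disjoint }(a_i)}\prod_i p^{|S_{a_i}|}\le\Bigl(\sum_a p^{|S_a|}\Bigr)^m=\mu^m.
$$
Markov's inequality then yields $\Pr(\cE)\le\Pr\bigl(Z_m(X)>(B\mu)^m\bigr)\le \mu^m/(B\mu)^m=B^{-m}$, which is the claimed bound.

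The only delicate point is Step 1. The greedy count must use the hypothesis for every $R$ of size at most $d$, applied to the union of previously chosen sets, and we need exactly $m-1$ steps with $(m-1)r\le d$ to obtain $m$ choices. The rest is routine.
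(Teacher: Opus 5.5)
Your proof is correct and is essentially the paper's argument. Greedily build more than $(B\mu)^m$ ordered sequences of $m=\lfloor d/r\rfloor+1$ pairwise disjoint sets $S_a\subseteq X$, deleting at each step the union of the $j<m$ sets already chosen (of size at most $jr\le d$); bound their expected number by $\mu^m$ using independence over disjoint supports; then apply Markov's inequality. The preliminary maximal-family argument in Step 1 is superfluous, since the greedy count alone gives the bound.
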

\begin{proof}
    Set $\mu = \mathbb{E} Z(X)$. If $\mu=0$, the event is impossible. Otherwise, set $s=\lfloor d/r\rfloor+1$ and suppose $X$ is such that $Z(X \setminus R) > B\mu$ for every $R \subseteq X$ with $|R|\le d$. There are more than $(B\mu)^s$ ordered sequences of $s$ pairwise disjoint labelled sets $S_a$ contained in $X$. Indeed, the union of any $j<s$ chosen sets has size at most $jr\le d$, so deleting this union leaves more than $B\mu$ choices for the next set. The expected number of these sequences is at most $\mu^s$, since containment events for disjoint supports are independent. Markov's inequality gives the desired bound.
\end{proof}

For a hypergraph $\cG$ and a set $S \subseteq V(\cG)$, set
\begin{equation} \label{eq:error_bound}
    D_\cG(S) = \bigl| \left\{ (h, h') \; : \; h, h' \in \cG[S] \text{ and } |h \cap h'| \ge 2  \right\} \bigr|.
\end{equation}
Here pairs are ordered and diagonal pairs (that is, $h = h'$) are included. The next lemma makes this overlap count small by deleting a few vertices.
\begin{lemma} \label{lemma:deletion}
    For every $k \in \mathbb{N}$, $\alpha, \gamma > 0$ and $K \ge 1$ there exists $T > 1$ such that the following holds. Let $\cH$ be a $k$-uniform hypergraph with $N \ge 1$ vertices, and suppose $q \in (0, 1]$ is such that
    $$
        \Delta_\cH(q) \le K \frac{\mu_\cH(q)^2}{N q}.
    $$
    Let $p \in (Tq, 1]$ and $X \sim V(\cH)_p$. Then, with probability at least $1 - \exp\{-Np\}$, there exists $R \subseteq X$ of size $|R| \le \alpha Np$ such that $X' := X \setminus R$ satisfies
    $$
        D_\cH(X') \le \gamma \Delta_\cH(p).
    $$
\end{lemma}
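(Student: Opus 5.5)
The plan is to apply Lemma~\ref{lemma:rodl_rucinski} to the family of all ordered pairs $(h,h')$ with $h,h'\in\cH$ and $|h\cap h'|\ge 2$ (diagonal pairs included), taking $S_{(h,h')}=h\cup h'$. Each such set is nonempty and has size at most $r=2k$. For $U\subseteq V(\cH)$ the count $Z(U)$ is then exactly $D_\cH(U)$. If $k=1$ the family is empty and $R=\emptyset$ works, so we may assume $k\ge 2$.

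It then suffices to prove the expectation bound
$$\mathbb{E} D_\cH(X)\le \frac{K}{kT}\,\Delta_\cH(p).$$
Given this, take $d=\lfloor \alpha Np\rfloor$ and $B=\gamma kT/K$, so that $B\,\mathbb{E}D_\cH(X)\le\gamma\Delta_\cH(p)$. Lemma~\ref{lemma:rodl_rucinski} says that, except with probability at most $B^{-(\lfloor d/2k\rfloor+1)}\le B^{-\alpha Np/(2k)}$, some $R\subseteq X$ with $|R|\le \alpha Np$ has $D_\cH(X\setminus R)\le \gamma\Delta_\cH(p)$. Choosing $T$ so large that $\log B\ge 2k/\alpha$ makes this failure probability at most $e^{-Np}$.

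For the expectation bound, first do the upper estimate. A pair with $|h\cap h'|=j\ge 2$ contributes $p^{2k-j}=(p/q)^{2k-j}q^{2k-j}\le (p/q)^{2k-2}q^{|h\cup h'|}$, using $p\ge q$. Summing and applying the hypothesis gives
$$\mathbb{E}D_\cH(X)\le (p/q)^{2k-2}\Delta_\cH(q)\le (p/q)^{2k-2}K\frac{\mu_\cH(q)^2}{Nq}.$$
Since $\mu_\cH(q)=(q/p)^k\mu_\cH(p)$, the right-hand side equals
$$K\,(q/p)^2\frac{\mu_\cH(p)^2}{Nq}=K\frac{q}{p}\cdot\frac{\mu_\cH(p)^2}{Np}<\frac{K}{T}\cdot\frac{\mu_\cH(p)^2}{Np}.$$

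The main step is the matching lower bound $\Delta_\cH(p)\ge k\,\mu_\cH(p)^2/(Np)$, which I would get by Cauchy--Schwarz on weighted degrees. Set $d(v)=\sum_{h\ni v}p^k$, so that $\sum_v d(v)=k\mu_\cH(p)$. Then
$$\sum_v d(v)^2=\sum_{h\cap h'\neq\emptyset}|h\cap h'|\,p^{2k}.$$
For intersecting pairs, $|h\cap h'|\,p^{2k}\le k\,p\cdot p^{|h\cup h'|}$, because $p^{2k}=p^{|h\cup h'|}p^{|h\cap h'|}\le p^{|h\cup h'|}\cdot p$. Hence
$$\Delta_\cH(p)\ge \frac{1}{kp}\sum_v d(v)^2\ge \frac{(k\mu_\cH(p))^2}{kpN}=\frac{k\,\mu_\cH(p)^2}{Np}.$$
Combining the upper and lower estimates gives $\mathbb{E}D_\cH(X)\le \frac{K}{kT}\Delta_\cH(p)$, as required.

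The one degenerate case is $e(\cH)=0$. Then $D_\cH\equiv 0$ and $\Delta_\cH(p)=0$, so the conclusion holds trivially with $R=\emptyset$. Otherwise $\Delta_\cH(p)>0$, and if $\mathbb{E}D_\cH(X)=0$ the event in Lemma~\ref{lemma:rodl_rucinski} is impossible. The only delicate point is the lower bound on $\Delta_\cH(p)$, which lets us trade the factor $q/p<1/T$ for the smallness parameter.
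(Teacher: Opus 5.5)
Your proposal is correct and matches the paper's proof. Both apply Lemma~\ref{lemma:rodl_rucinski} to the sets $h\cup h'$ with $|h\cap h'|\ge 2$, use the upper bound $\mathbb{E}D_\cH(X)\le (p/q)^{2k-2}\Delta_\cH(q)$, and use the same Cauchy--Schwarz degree lower bound on $\Delta_\cH(p)$. The only differences are cosmetic: you take $r=2k$ instead of $2k-2$, keep the factor $k$ in $\Delta_\cH(p)\ge k\mu_\cH(p)^2/(Np)$, and let $B$ grow with $T$ rather than fixing $B=\exp(2k/\alpha)$.
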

\begin{proof}
    Choose $T=1+K\gamma^{-1}\exp(2k/\alpha)$. If $k=1$, then $D_\cH(X)=0$ and we take $R=\emptyset$. Hence assume $k\ge2$. Note that
    $$
        \mathbb{E}D_\cH(X) \le(p/q)^{2k-2} \Delta_\cH(q) \le K\frac qp \; \frac{\mu_\cH(p)^2}{Np}.
    $$
    By the Cauchy--Schwarz inequality, we have
    $$
        \Delta_\cH(p)\ge\frac{p^{2k-1}}{k} \sum_{v\in V(\cH)}\deg_\cH(v)^2 \ge \frac{\mu_\cH(p)^2}{Np}.
    $$
    Therefore, since $p \ge Tq$, we conclude $\mathbb{E}D_\cH(X) \le K \Delta_\cH(p) / T$.

    Apply Lemma~\ref{lemma:rodl_rucinski} to the labelled sets $h\cup h'$,
    one for each ordered pair $h,h'\in\cH$ with $|h\cap h'|\ge2$.
    Their counting function is $D_\cH$, and each has size at most $r = 2k - 2$.
    Set $d=\lfloor\alpha Np\rfloor$ and $B=\exp(2k/\alpha)$.
    Since $\lfloor d/r\rfloor+1>\alpha Np/(2k)$,
    with probability at least $1 - \exp\{-Np\}$ there is
    $R\subseteq X$, $|R|\le\alpha Np$, such that
    $$
        D_\cH(X\setminus R)\le B\; \mathbb{E}D_\cH(X)
        \le \frac{B \; K}{T}\Delta_\cH(p)\le\gamma\Delta_\cH(p).
    $$
\end{proof}

\section{Fingerprint-model lemma} 
\label{sec:fingerprint}

The following is the key lemma in the proof of Theorem \ref{thm:main}. The conclusion of the lemma is that there exists a set $C$ with properties that resemble those in Theorem \ref{thm:main}. For a fixed set $C$ and $X \sim V_p$, we expect $|C \cap X \cap A| \approx |C \cap A|p$ and $e_\cG(C \cap X) \approx e_\cG(C) p^k$. The fact that the model set $C$ depends only on a small fingerprint will allow us to use a union bound to conclude that these approximations hold for every possible $C$, thus giving the conclusion of Theorem \ref{thm:main}.

\begin{lemma} \label{lemma:partite}
    For every $k,t\in\mathbb N$ with $2\le k\le t$ and $\alpha,\varepsilon\in(0,1]$ there exist $c,Q>0$ and $h\in\mathbb N$ such that the following holds. Let $\cG$ be a $k$-uniform hypergraph on $V=V_1\sqcup\cdots\sqcup V_t$, with each edge meeting each part at most once. Let $\emptyset\ne Y\subseteq X\subseteq V$, $|X| \ge 1/c$, and let $\cA = \{A_i\}_{i \in I}$ be a nonempty family of subsets of $V$ with $|I|\le2^{c|X|}$. 
    
    There exist $\mathbf F_Y \in (2^Y)^{2t}$, $\mathbf{F}_X \in (2^X)^{2t}$, and $\mathbf{J} = (J_1,\ldots,J_t) \in (2^I)^t$, such that
    $$
        \sum_{X' \in\mathbf F_X} |X'| + \sum_{Y' \in \mathbf F_Y}|Y'| \le \alpha|X| \quad \text{ and } \quad \sum_{J_i \in \mathbf{J}} |J_i| \le h t,
    $$
    and a set $C=C(\mathbf F_X, \mathbf F_Y, \mathbf{J})\subseteq V$, such that
    $$
        \Bigl||Y\cap A|-|C\cap X\cap A|\Bigr|\le\varepsilon|X|
        \quad\text{for every $A\in\cA$}
    $$
    and
    $$
        \Bigl|e_\cG(Y)-e_\cG(C\cap X)\Bigr|\le Q\sqrt{|X|D_\cG(X)}.
    $$
\end{lemma}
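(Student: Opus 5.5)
We build $C$ one part at a time, $C=C_1\sqcup\cdots\sqcup C_t$ with $C_i\subseteq V_i$, so that after step $i$ the hybrid set
$$
Z_i=(C_1\cup\cdots\cup C_i)\cap X\;\cup\;(Y\cap(V_{i+1}\cup\cdots\cup V_t))
$$
satisfies $|Z_i\cap A|\approx|Z_{i-1}\cap A|$ for all $A\in\cA$ and $e_\cG(Z_i)\approx e_\cG(Z_{i-1})$. Since $Z_0=Y$ and $Z_t=C\cap X$, the conclusion follows by summing $t$ step errors, each of order $\varepsilon|X|/t$ and $(Q/t)\sqrt{|X|D_\cG(X)}$. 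Partiteness is what makes each step tractable. Because every edge meets $V_i$ at most once, $e_\cG(Z_i)$ is \emph{linear} in the $V_i$-part:
$$
e_\cG(Z)=e_0+\sum_{v\in Z\cap V_i} w(v),
$$
where $w(v)$ counts edges through $v$ whose other vertices lie in $Z\setminus V_i$. So in step $i$ we must replace $Y\cap V_i$ by $C_i\cap X$ while preserving finitely many linear statistics: the $|I|$ indicator sums $\mathbf 1_A$ and the weighted sum $w$. The two fingerprint sets of step $i$ (hence the $2t$ coordinates) are random samples $Y_i'\subseteq Y\cap V_i$ and $X_i'\subseteq X\cap V_i$. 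Each vertex is taken with probability $\beta=\alpha/(4t)$, conditioned on the samples being small, so that the total size is at most $\alpha|X|$.

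\textbf{Defining $C_i$.} The weight $w$ depends on $Z_{i-1}$ outside $V_i$, which is unknown to a fingerprint. I would therefore estimate it from samples of the other parts already chosen. For $v\in V_i$, let $\hat w(v)$ be $\beta^{-(k-1)}$ times the number of edges through $v$ whose other vertices lie in the samples (the $X'_j$-restricted $C_j$ for $j<i$, and $Y'_j$ for $j>i$). This requires reusing samples across steps, or drawing each part's sample once and using it for both roles. Then set
$$
C_i=\{v\in V_i:\ \hat w(v)\text{ lies in a bucket chosen by }\mathbf J\},
$$
refined by membership pattern in a few sets of $\cA$ indexed by $J_i$, $|J_i|\le h$. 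Concretely, partition $V_i$ into $O(1/\varepsilon)$ buckets by the value of $\hat w$. In each bucket the density of $Y$ inside $X$ is estimated by $|Y'_i\cap\text{bucket}|/|X'_i\cap\text{bucket}|$, and we take a subset with the same density.

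\textbf{The sets $\cA$.} To handle $\cA$ we need to take subsets of buckets "at the right density" deterministically. I would use an iterative energy-increment argument. If some $A\in\cA$ sees a discrepancy bigger than $\varepsilon|X|/t$, we add $A$ to $J_i$ and refine the partition by $A$. Each refinement increases a bounded mean-square density, so at most $h=O(\varepsilon^{-2}t^2)$ refinements occur. The sample estimates are accurate for all $A$ simultaneously by Chernoff and a union bound over $|I|\le2^{c|X|}$, with $c\ll\beta\varepsilon^2/t^2$. Since the lemma only asserts existence, we fix a good outcome of the samples.

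\textbf{Main obstacle.} The hard step is the second-moment control of $\hat w$, that is, showing
$$
\Bigl|\sum_{v}(\hat w(v)-w(v))\bigl(\mathbf 1_{Y}(v)-\mathbf 1_{C_i\cap X}(v)\bigr)\Bigr|\lesssim\sqrt{|X|D_\cG(X)}.
$$
By Cauchy--Schwarz this reduces to bounding $\sum_{v\in X}\mathbb E(\hat w(v)-w(v))^2$. Expanding the variance yields pairs of edges through the same $v$ that share at least one further vertex. Such pairs have $|h\cap h'|\ge2$ and are counted by $D_\cG(X)$, with $\beta$-dependent constants absorbed into $Q$. Markov's inequality then bounds the sum by $Q^2D_\cG(X)$ with positive probability, and this event must be intersected with the Chernoff events. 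The bucket rounding error is of order $\varepsilon\cdot e(\cdot)$; I would handle it by choosing bucket widths relative to $\sqrt{D/|X|}$, or by noting that it is again controlled by $\sqrt{|X|D}$, since $\sum w^2\le D$. This accounting is where I expect the real difficulty.
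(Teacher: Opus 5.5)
You have the paper's outer architecture: a hybrid over the $t$ parts, linearity of the edge count in the $V_i$-coordinate, and the estimator $\beta^{-(k-1)}$ times the number of sampled edges, with second moment controlled by pairs with $|h\cap h'|\ge 2$. This is the paper's $L\phi_i'$ with $L=\theta^{-(k-1)}$. Two repairs are needed in that part.

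\begin{itemize}
\item \emph{Sample reuse.} Do not use one sample per part across several steps. Once a sample has been used to build $C_j$, or fixed to make an earlier step succeed, it is no longer a fresh $\beta$-sample. The unbiasedness $L\,\mathbb E\phi_i'=\phi_i$ behind your variance bound then fails, and so do the later Chernoff estimates. The paper draws fresh $S_i\subseteq Y$ and $T_i\subseteq X$ at every step, after $C_{i+1}$ is fixed; the format $(2^Y)^{2t}\times(2^X)^{2t}$ allows this.
\item \emph{Second moment.} The inequality $\sum w^2\le D_\cG(X)$ is false: in a linear hypergraph $D_\cG(X)=e(\cG[X])$. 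The correct bound is $\sum_{x\in X_i}\phi_i'(x)^2\le |X|D_\cG(X)$. It follows from $(k-1)\deg_{\cG[X]}(x)=\sum_v \deg_{\cG[X]}(x,v)$ and Cauchy--Schwarz over codegrees.
\end{itemize}

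The genuine gap is the step you defer: turning a weight on $V_i$ into a set $C_i$ that is determined by the records alone. This is the actual content of the lemma, and your bucket scheme does not achieve it, for three reasons.

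\begin{itemize}
\item \emph{$C$ cannot see $X$.} $C$ is defined without access to $X$. So ``take a subset of the bucket with the same density'' gives no control of $|C\cap X\cap b|$; a subset of $V_i\cap b$ chosen from the records may miss $X$ entirely.
\item \emph{No concentration for $\cA$.} Your Chernoff and union bounds concern the samples. What must hold for every $A\in\cA$ outside $J_i$ is concentration of $|C\cap X\cap A|$. A deterministic choice inside each cell can be badly correlated with such an $A$. The energy increment only makes $|Y\cap A|$ close to the \emph{expected} count $\sum_P y(P)|P\cap A|$.
\item \emph{Rounding error.} With $O(1/\varepsilon)$ value buckets the rounding error is of order $\varepsilon(\max w)|X|$. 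This exceeds $\sqrt{|X|D_\cG(X)}$ by a factor growing with $|X|$, for instance in a linear hypergraph with linear degrees and weights spread over $[0,\max w]$. Buckets of width $\sqrt{D_\cG(X)/|X|}$ instead produce a number of cells growing with $|X|$, and the per-cell sampling errors no longer sum to $O(\sqrt{|X|D_\cG(X)})$.
\end{itemize}

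The paper's Lemma~\ref{lemma:approx_full} resolves all three at once.

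\begin{itemize}
\item Order $X$ by $\phi$, record every $s$-th point as $X'$, and cut $V$ immediately after these points. Each interval then contains at most $s$ points of $X$.
\item Refine by at most $h$ sets of $\cA$ via the energy increment.
\item Include each whole part $P$ with probability $y(P)$: pick a uniform representative and record it in $Y'$ if it lies in $Y$.
\end{itemize}

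The estimates then go through as follows.

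\begin{itemize}
\item Parts of size at most $s$ give Hoeffding tails $2e^{-\varepsilon^2 n/(2s)}$, which survive the union bound over $2^{c|X|}$ sets.
\item The bias is at most $\sum_P |P|(\max_P\phi-\min_P\phi)\le s\max_X\phi\le s(\sum_{x\in X}\phi(x)^2)^{1/2}$. This holds because the ranges of consecutive blocks telescope and refinement only shrinks them.
\item The variance is at most $\frac s4\sum_{x\in X}\phi(x)^2$.
\item The records cost $|X'|+|Y'|\le n/s+2^h(n/s+1)\le\alpha n$.
\end{itemize}

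The missing idea is this rounding at constant scale in rank order, with the random outcome recorded in the fingerprint.
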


We emphasise that the set $C$ depends on the ambient hypergraph $\cG$ and the family $\cA$, and furthermore on tuples $\mathbf{F}_X$, $\mathbf{F}_Y$, and $\mathbf{J}$. It does not depend on the whole sets $X$ and $Y$, nor on the sets $A_j$ whose indices lie outside $\bigcup_{i=1}^t J_i$. The error parameter $D_\cG(X)$ is defined in \eqref{eq:error_bound}.

For the rest of this section, whenever we consider a particular finite set, we assume that there is a fixed total ordering on it. For a finite set $V$, a subset $U \subseteq V$ and a function $\phi: V \to \mathbb{R}$, set
$$
    \phi(U) = \sum_{u \in U} \phi(u).
$$
The following lemma is the main building block in the proof of Lemma \ref{lemma:partite}.

\begin{lemma} \label{lemma:approx_full}
    For every $\alpha, \varepsilon > 0$ there exist $c > 0$ and $B, h \in \mathbb{N}$ such that the following holds. Let $V$ be a finite ordered set, $\phi : V \to [0,\infty)$, and $\emptyset \neq Y \subseteq X \subseteq V$ with $|X| \ge 1/c$. Let $\cA = \{A_i\}_{i \in I}$ be a family of subsets of $V$ with $|I| \le 2^{c |X|}$. There exist $X' \subseteq X$, $Y' \subseteq Y$, and $J \subseteq I$, such that 
    $$
        |X'| + |Y'| \le \alpha |X| \quad \text{ and } \quad |J| \le h,
    $$
    and a set $C = C(\phi, X', Y', J) \subseteq V$, such that
    $$
        \Bigl| |Y \cap A| - |C \cap X \cap A| \Bigr| \le \varepsilon |X| \quad \text{for every $A \in \cA$}
    $$
    and
    $$
        \Bigl| \phi(Y) - \phi(C \cap X) \Bigr| \le B \left( \sum_{x \in X} \phi(x)^2 \right)^{1/2}.
    $$
\end{lemma}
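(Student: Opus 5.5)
The plan is to run a Frieze--Kannan style energy increment on a partition of $V$, using a small random sample of $X$ and $Y$ as the fingerprint. The target is a set $C$ whose trace on $X$ mimics $Y$ with respect to the tests $\mathbf 1_A$, $A\in\cA$, and the weight $\phi$. To make the $\phi$ error of the right shape, I would first split $V$ into $O_{\varepsilon}(1)$ \emph{weight classes} by $\phi$. Set $\Phi=(\sum_{x\in X}\phi(x)^2)^{1/2}$. Call $v$ \emph{heavy} if $\phi(v)>\Phi/\sqrt{\eta|X|}$, for a small $\eta=\eta(\alpha,\varepsilon)$. Then at most $\eta|X|$ elements of $X$ are heavy. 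I would put all heavy elements of $Y$ into $Y'$ and all heavy elements of $X$ into $X'$, and let $C$ agree with $Y'$ on heavy vertices, so heavy vertices contribute no error. I would bin the light vertices dyadically, which needs only $O(\log(1/\eta))$ bins above a negligible threshold. The vertices below that threshold contribute at most $\sqrt{\eta}\,\Phi$ to $\phi(Y)$ and to $\phi(C\cap X)$, by Cauchy--Schwarz.

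The core step is an iterative refinement. Maintain a partition $\cP$ of $V$ generated by the weight classes and by the sets $A_j$ with $j\in J$. Let $f=\mathbb{E}[\mathbf 1_Y\mid \cP]$, computed with respect to $X$, i.e.\ $f$ equals $\rho_P:=|Y\cap P|/|X\cap P|$ on each cell $P$. If some $A\in\cA$ has $\bigl||Y\cap A|-\sum_P\rho_P|X\cap P\cap A|\bigr|>\varepsilon|X|/2$, add its index to $J$ and refine. The usual $L^2$ energy $\sum_P\rho_P^2|X\cap P|/|X|$ increases by $\Omega(\varepsilon^2)$ and is at most $1$. Hence after $h=O(\varepsilon^{-2})$ steps no violating $A$ remains, and $|J|\le h$ with the number of cells bounded in terms of $\alpha$ and $\varepsilon$.

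The model $C$ cannot use $\rho_P$ and $X\cap P$ directly, since they depend on all of $X$ and $Y$. Instead I would take $X'$ to be a $\beta$-random sample of $X$ and $Y'=Y\cap X'$, plus the heavy sets, with $\beta\ll\alpha$. The densities $\rho_P$ are then estimated by $|Y'\cap P|/|X'\cap P|$ on cells with $|X\cap P|\ge\eta|X|$; small cells are absorbed into the error. The refinement can be rerun with these estimated densities, so that $J$ and the partition are functions of the fingerprint alone.

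The main obstacle is the rounding step, turning the fractional model $\rho$ into an honest set $C\subseteq V$ with $|C\cap X\cap A|\approx\sum_P\rho_P|X\cap P\cap A|$ simultaneously for all $A\in\cA$, while $C$ depends only on $(\phi,X',Y',J)$. What I would try first is as follows. Fix, independently of $X$ and $Y$, a random colouring $\lambda:V\to[0,1]$, realised as a fixed function of the ordering and thereby \emph{derandomised}. Set $C\cap P=\{v\in P:\lambda(v)<\hat\rho_P\}$ on light cells. For a fixed outcome, Chernoff's bound gives $|C\cap X\cap P\cap A|\approx\hat\rho_P|X\cap P\cap A|$ with failure probability $e^{-\Omega(\varepsilon^2\eta|X|)}$. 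Taking $c$ small, this beats a union bound over $|I|\le2^{c|X|}$ sets, over all cells, and over all possible fingerprints. The same Chernoff argument on each weight bin gives $\phi(C\cap X\cap P)\approx\hat\rho_P\,\phi(X\cap P)$. Combined with $\phi(Y\cap P)\approx\rho_P\phi(X\cap P)$ up to bin-width error, and Cauchy--Schwarz $\sum_P\phi(X\cap P)\cdot\mathrm{err}\le B\Phi$, this gives the $\phi$ bound.

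If the derandomisation cannot be made to depend only on the fingerprint, the fallback is to enlarge $X',Y'$ by the $O(\varepsilon|X|)$ elements where a fixed rounding deviates. This is affordable because $\alpha$ may be taken large relative to $\varepsilon$ after rescaling the constants.
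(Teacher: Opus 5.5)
Your energy-increment refinement is sound and matches the paper's. The proposal fails in two places: the rounding step, and the $\phi$ estimate.

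\textbf{The rounding step.} Since $C$ may depend on $X$ and $Y$ only through $(X',Y',J)$, your colouring $\lambda$ is fixed before $X$, $Y$ and $\cA$ are given. These are arbitrary, so Chernoff's bound has no randomness to act on. A union bound over all possible $X$ is also unavailable, because $|V|$ is not bounded in terms of $|X|$.

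In fact, on each cell your $C\cap X$ is an initial segment of $X\cap P$ in the $\lambda$-order, and this is easy to defeat. Take $\phi\equiv1$, so nothing is heavy and there is one weight class. List $X$ as $x_1,\ldots,x_n$ in increasing $\lambda$-order, and let $Y=\{x_j: j \text{ odd}\}$ and $\cA=\{V,A_0\}$ with $A_0=\{x_j: j\le n/2\}$. All fractional discrepancies are $O(1)$, so your refinement never fires and there is a single cell. The test $A=V$ forces $C\cap X$ to be an initial segment of size at least $n/2-\varepsilon n$. The error on $A_0$ is then at least $n/4-\varepsilon n-2$, whatever fingerprint you choose.

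Your fallback does not help here. $C\cap X$ and $Y$ disagree on about half of $X$, and the discrepancy is not caused by $O(\varepsilon|X|)$ identifiable elements. So recording ``the elements where the rounding deviates'' cannot repair it.

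\textbf{The $\phi$ estimate.} This has an independent gap of the same nature: $O(1)$ weight bins give only constant \emph{relative} accuracy. Within a dyadic bin $\phi$ varies by a factor of $2$, so $\phi(Y\cap P)-\rho_P\phi(X\cap P)$ can be of order $\phi(X\cap P)$. The total error of your fractional model can therefore be of order $\phi(X)$, which may be as large as $\sqrt{|X|}\,\Phi$. No Cauchy--Schwarz step turns this into $B\Phi$.

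For example, let $\phi$ take $n$ evenly spaced values in $[1,2)$ on $X$, so at most two bins meet $X$ and none is heavy. Let $\cA=\{V\}$, and let $Y$ be the upper half, in $\phi$-order, of each bin. Then $\phi(Y)-\sum_P\rho_P\phi(X\cap P)\ge n/16-O(1)$, while $\Phi<2\sqrt n$. Any rounding designed to reproduce the fractional model inherits this error. Separately, your bin thresholds depend on $\Phi$ and $|X|$, which the fingerprint does not determine.

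\textbf{The missing idea.} The paper uses $\Theta(|X|)$ cells of bounded size, whose extension to all of $V$ is encoded by the fingerprint.
\begin{itemize}
\item Order $V$ by $\phi$, cut $X$ into consecutive blocks of a constant size $s$, and record the cut points as $X'$. By telescoping, the bias is then at most $\sum_P|P|(\max_P\phi-\min_P\phi)\le s\max_X\phi\le s\Phi$.
\item Refine by the at most $h$ sets produced by the energy increment.
\item In each cell choose a uniformly random representative $x_P$, put the whole cell into $C$ if and only if $x_P\in Y$, and record these representatives in $Y'$.
\end{itemize}
The randomness is thus used against the \emph{fixed} $X$, $Y$, $\cA$. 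Hoeffding's inequality with cells of size at most $s$, plus a union bound over $|I|\le 2^{c|X|}$, handles occupancy. Chebyshev's inequality with variance at most $s\Phi^2/4$ handles $\phi$. A good outcome is stored in $Y'$, rather than relying on a colouring fixed in advance.
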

\begin{proof}
    By decreasing $\alpha$ and $\varepsilon$, we may assume that both lie in $(0,1]$. Set
    $$
        h=\lceil4/\varepsilon^2\rceil,\qquad
        s=\left\lceil\frac{4(1+2^h)}{\alpha}\right\rceil,\qquad
        c=\min\left\{\frac{\varepsilon^2}{8s\log2},\frac{\alpha}{4\cdot2^h}\right\},\qquad
        B=\left\lceil\max\{2s,c^{-1/2}\}\right\rceil.
    $$
    Define the order $\prec$ on $V$ by non-decreasing $\phi$, tie-breaking according to the given total order on $V$. Let $x_1 \prec \ldots \prec x_n$ denote the induced ordering of $X$, where $n = |X|$. Set
    $$
        X' := \{ x_{j s} \; : \; 1 \le j \le \lfloor n / s \rfloor \}.
    $$
    Cutting immediately after these points gives the partition $\cP=\{P_1,\ldots,P_m\}$ of $X$, where $m=\lceil n/s\rceil$ and
    $$
        P_j=\{x_\ell:(j-1)s<\ell\le\min\{js,n\}\}.
    $$
    Every part has size at most $s$ and $m\le n/s+1$. We now refine $\cP$ as follows: As long as there exists $A_i\in\cA$ such that
    \begin{equation} \label{eq:refine}
        \left| |Y\cap A_i|-\sum_{P\in\cP}y(P)|P\cap A_i|\right|\ge\varepsilon n/2,
        \qquad \text{where } y(P)=\frac{|Y\cap P|}{|P|},
    \end{equation}
    refine $\cP$ by $A_i$, splitting each part $P \in \cP$ into $P \cap A_i$ and $P \setminus A_i$ (discarding empty parts). The next claim shows that this process finishes after at most $h$ iterations.

    \begin{claim}
        Let $J\subseteq I$ be the indices used in this process. Then $|J|\le h$.
    \end{claim}
    \claimproofstart
        The quantity $\cE(\cP) := \sum_{P\in\cP}|P|y(P)^2$, which we shall refer to as the \emph{energy} of $\cP$, lies in $[0,n]$. If $\cP'$ is obtained by refining $\cP$ by $A_i$, its increase is
        $$
            \cE(\cP') - \cE(\cP) = \sum_{P\in\cP}\ \sum_{\substack{P' \in \cP'\\P' \subseteq P}}
                |P'|\bigl(y(P')-y(P)\bigr)^2 \ge \sum_{P \in \cP} \sum_{\substack{P' \in \cP'\\P' \subseteq P}} |P' \cap A_i| (y(P') - y(P))^2.
        $$
        We also have
        $$
            \varepsilon n / 2 \le \Bigl| |Y\cap A_i|-\sum_{P\in\cP}y(P)|P\cap A_i| \Bigr|           
            = \Bigl| \sum_{P \in \cP} \sum_{\substack{P' \in \cP' \\ P' \subseteq P}} (y(P') - y(P))|P' \cap A_i| \Bigr|.
        $$
        The Cauchy--Schwarz inequality implies that the increase is thus at least $\varepsilon^2n/4$. As the energy is bounded by $n$, there are at most $4/\varepsilon^2\le h$ refinements. 
        
        Note that a set $A_i$ cannot satisfy \eqref{eq:refine} more than once, since refining $\cP$ by $A_i$ again would not change it and hence the energy would not change -- contradicting the fact that it should increase by at least $\varepsilon^2 n / 4$.
    \claimproofend

    Let $\cP$ now denote the final partition. It has at most $2^h m$ parts, each of size at most $s$, and the discrepancy in \eqref{eq:refine} is less than $\varepsilon n/2$ for every $A \in \cA$. For each $P\in\cP$, independently, choose $x_P\in P$ uniformly at random, and add it to $Y'$ if $x_P \in Y$. Set
    $$
        C'=\bigcup_{\substack{P\in\cP\\x_P\in Y'}}P.
    $$
    A particular $P \in \cP$ is included in $C'$ with probability $y(P)$. As $\sum_{P\in\cP}|P|^2\le s n$,  Hoeffding's inequality gives
    $$
        \Pr\left(\bigl||C'\cap A|-\mathbb E|C'\cap A|\bigr| 
        \le \varepsilon n/2\right) \ge 1 - 2\exp\left(-\frac{\varepsilon^2n}{2s}\right).
    $$
    By the union bound over $\cA$, with probability at least, say, $3/4$, this event holds simultaneously for every $A \in \cA$. Since $\mathbb E|C'\cap A|=\sum_{P\in\cP}y(P)|P\cap A|$, the stopping condition  \eqref{eq:refine} and the triangle inequality imply
    \begin{equation} \label{eq:C_prime_occupancy}
        \bigl| |Y \cap A_i| - |C' \cap A_i| \bigr| \le \varepsilon n \quad \text{for every $A_i \in \cA$.}
    \end{equation}

    We now estimate $|\phi(C') - \phi(Y)|$. By the linearity of expectation, we have
    $$
        \mathbb E[\phi(C')]-\phi(Y)  =\sum_{\substack{P\in\mathcal P \\ Y \cap P \neq \emptyset}} \left(\frac{|Y\cap P|}{|P|}\phi(P)-\phi(Y\cap P)\right) 
        =\sum_{P\in\mathcal P}\sum_{x\in Y\cap P} \left(\frac{\phi(P)}{|P|}-\phi(x)\right).
    $$  
    The average $\phi(P)/|P|$ and each $\phi(x)$ both lie between the minimum and maximum of $\phi$ on $P$. Thus
    $$
      \left|\mathbb E[\phi(C')]-\phi(Y)\right| 
      \le\sum_{P\in\mathcal P}|Y\cap P| \left(\max_{x\in P}\phi(x)-\min_{x\in P}\phi(x)\right) 
      \le\sum_{P\in\mathcal P}|P| \left(\max_{x\in P}\phi(x)-\min_{x\in P}\phi(x)\right).
    $$
    Note that the last sum does not increase under refinement. For the original consecutive blocks, the sum of the $\phi$-ranges is at most $\max_{x\in X}\phi(x)-\min_{x\in X}\phi(x)$. Since each block has size at most $s$ and $\phi\ge0$, we conclude
    $$
        \left| \mathbb{E}[\phi(C')] - \phi(Y) \right| \le s \max_{x \in X} \phi(x).
    $$
    Independence of the choices in distinct parts and Cauchy--Schwarz also give
    $$
        \Var\left[ \phi(C') \right] \le \frac{1}{4} \sum_{P\in\cP}\phi(P)^2 \le \frac{s}{4}\sum_{x\in X}\phi(x)^2.
    $$
    Chebyshev's inequality now shows that, with probability at least $3/4$,
    \begin{equation} \label{eq:C_prime_phi}
        |\phi(C')-\phi(Y)| \le B\left(\sum_{x\in X}\phi(x)^2\right)^{1/2}.
    \end{equation}
    Finally, we deterministically have
    $$
        |X'|+|Y'| \le n/s+2^h(n/s+1) \le\alpha n/4+2^h\le\alpha n/2\le\alpha n.
    $$
    There is thus a choice of $Y'$ for which $C'$ satisfies \eqref{eq:C_prime_occupancy} and \eqref{eq:C_prime_phi}.

    We now define $C$ based on $X', Y'$, and $J$ as follows. Partition $V$ into intervals by cutting immediately after each point of $X'$ in the order $\prec$, and refine this partition by every $A_j$ with $j\in J$, in an arbitrary order. Denote the resulting partition by $\cQ$, and let
    $$
        C=\bigcup_{\substack{Q\in\cQ\\Q\cap Y'\ne\emptyset}}Q.
    $$
    Observe that $\{ X \cap Q \; : \; Q \in \cQ, X \cap Q \neq \emptyset\}$ is precisely the partition $\cP$. This further implies $C\cap X=C'$, so $C$ has the desired properties.
\end{proof}

With the previous lemma at hand, we are ready to proceed to the proof of Lemma \ref{lemma:partite}.

\begin{proof}[Proof of Lemma \ref{lemma:partite}]
    Fix an arbitrary ordering of $V$. Let $c,B,h$ be given by Lemma \ref{lemma:approx_full} with parameters $\alpha/(2t)$ and $\varepsilon/t$, and set $L=(16t/\alpha)^{k-1}$ and $Q=tL(8+B)$. For each $i \in [t]$ set $V_{< i} = V_1 \cup \ldots \cup V_{i-1}$ and $V_{>i} = V_{i+1} \cup \ldots \cup V_t$ ($V_{\le i}$ and $V_{\ge i}$ are defined analogously). For $i \in [t+1]$ and $A, B \subseteq V$, define
    $$
        e_{\cG}^{(i)}(A, B) = \Bigl| \{ h \in \cG \; : \; h \cap V_{< i} \subseteq A \text{ and } h \cap V_{\ge i} \subseteq B \} \Bigr|.
    $$    
    
    We iteratively construct $C_{t} \subseteq C_{t-1} \subseteq \ldots \subseteq C_1$, with $C_i \subseteq V_{\ge i}$, such that for every $i \in [t]$ we have
    \begin{equation} \label{eq:induction}
        \begin{gathered}
            \Bigl||Y\cap V_{\ge i}\cap A|-|C_i\cap X\cap A|\Bigr|
            \le\frac{(t-i+1)\varepsilon|X|}{t}\quad \text{ for every } A\in\cA,\\
            \left|e_\cG^{(i+1)}(Y,C_{i+1}\cap X)-e_\cG^{(i)}(Y,C_i\cap X)\right|
            \le\frac{Q\sqrt{|X|D_\cG(X)}}{t},
        \end{gathered}
    \end{equation}            
    where $C_{t+1} = \emptyset$.
    
    Suppose we have already constructed $C_t, \ldots, C_{i+1}$, for some $i \in [t]$, and set $X_i = X \cap V_i$  and $Y_i=Y \cap V_i$. Define $\phi_i: V_i \to \mathbb{N}_0$ as
    $$
        \phi_i(v) = \Bigl| \left\{ h \in \cG \; : \; v \in h,\; h \cap V_{<i} \subseteq Y \text{ and } h \cap V_{>i} \subseteq C_{i+1} \cap X \right\} \Bigr|.
    $$
    Note that $\phi_i$ depends on $C_{i+1}$, $X$ and $Y$. Our first goal is to approximate $\phi_i$ using less information.

    \begin{claim}
        There exist $S_i \subseteq Y$ and $T_i \subseteq X$ such that $|S_i| + |T_i| \le \alpha |X|/(2t)$ and
        \begin{equation} \label{eq:approx_phi}
            \sum_{x \in X_i} |\phi_i(x) - L \phi_i'(x)| \le 4L \sqrt{|X| D_\cG(X)},    
        \end{equation}
        where $\phi_i' : V_i \to \mathbb{N}_0$ is defined as
        $$
            \phi_i'(v) =  \Bigl| \left\{ h \in \cG \; : \; v \in h,\; h \cap V_{<i} \subseteq S_i \text{ and } h \cap V_{>i} \subseteq C_{i+1} \cap T_i \right\} \Bigr|.
        $$
    \end{claim}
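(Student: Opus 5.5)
The plan is to obtain $S_i$ and $T_i$ by independent random sampling at rate $\rho := \alpha/(16t)$. Note that $\rho^{k-1} = L^{-1}$ by the choice $L = (16t/\alpha)^{k-1}$. Concretely, put each vertex of $Y \cap V_{<i}$ into $S_i$ independently with probability $\rho$, and each vertex of $X \cap V_{>i}$ into $T_i$ independently with probability $\rho$. Clearly $S_i \subseteq Y$ and $T_i \subseteq X$. We show that, with positive probability, both the size bound and \eqref{eq:approx_phi} hold. This yields the required choice.

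\emph{Unbiasedness.} Fix $v \in X_i$ and an edge $h$ counted by $\phi_i(v)$. Since $h$ meets each part at most once and $v \in h \cap V_i$, the set $h \setminus \{v\}$ consists of exactly $k-1$ distinct vertices, all in $V_{<i} \cup V_{>i}$. Those in $V_{<i}$ lie in $Y$, and those in $V_{>i}$ lie in $C_{i+1} \cap X$. Hence $h$ is counted by $\phi_i'(v)$ if and only if every vertex of $h \setminus \{v\}$ is sampled, which happens with probability $\rho^{k-1} = 1/L$. Edges not counted by $\phi_i(v)$ are never counted by $\phi_i'(v)$, because $S_i \subseteq Y$ and $T_i \subseteq X$. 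Therefore $\mathbb{E}[L\phi_i'(v)] = \phi_i(v)$.

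\emph{Variance.} Write $L\phi_i'(v) = L\sum_h \mathbf{1}[h\setminus\{v\} \text{ sampled}]$. For a pair $h,h'$ with $h \cap h' = \{v\}$, the indicators depend on disjoint sets of vertices and are therefore independent, so their covariance vanishes. For a pair with $|h \cap h'| \ge 2$, the covariance is at most $1$. Hence
$$
\Var[L\phi_i'(v)] \le L^2 D_v,
$$
where $D_v$ is the number of ordered pairs $(h,h')$ of edges of $\cG[X]$ with $v \in h \cap h'$ and $|h \cap h'| \ge 2$; all edges counted by $\phi_i(v)$ lie in $X$. The key observation is the following. Since each edge meets $V_i$ at most once, a pair $(h,h')$ has at most one vertex of $V_i$ in $h \cap h'$. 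Thus $\sum_{v \in X_i} D_v \le D_\cG(X)$, with no loss of a factor $k$.

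By Cauchy--Schwarz,
$$
\mathbb{E}\sum_{x\in X_i}|\phi_i(x)-L\phi_i'(x)| \le \sum_{x \in X_i} \sqrt{\Var[L\phi_i'(x)]} \le L\sqrt{|X_i|\sum_x D_x} \le L\sqrt{|X|D_\cG(X)}.
$$
Markov's inequality then shows that \eqref{eq:approx_phi} fails with probability at most $1/4$. This also covers the case $D_\cG(X) = 0$, where the left-hand side is zero almost surely.

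\emph{Size.} We have $\mathbb{E}(|S_i|+|T_i|) \le \rho|X| = \alpha|X|/(16t)$. By Markov, $|S_i|+|T_i| > \alpha|X|/(2t)$ with probability at most $1/8$. Both good events therefore hold simultaneously with probability at least $5/8 > 0$, which proves the claim.

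The only delicate step is the variance bound. It relies on two facts: pairs of edges meeting only in $v$ are uncorrelated, because the sampled sets $h\setminus\{v\}$ and $h'\setminus\{v\}$ are disjoint; and the partite structure prevents double counting of overlapping pairs across different $v \in X_i$.
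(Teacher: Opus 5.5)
Your proof is correct and follows essentially the same route as the paper. Both arguments sample at rate $\alpha/(16t)$ so that $L\phi_i'$ is unbiased, and both use a second-moment bound in which only pairs with $|h\cap h'|\ge 2$ contribute, each such pair being charged to at most one $x\in X_i$ by the partite structure, followed by Cauchy--Schwarz and Markov. The differences are cosmetic: you sample only $Y\cap V_{<i}$ and $X\cap V_{>i}$ (the paper samples all of $Y$ and $X$), and you apply Markov to $|S_i|+|T_i|$ jointly rather than to each set separately.
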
 
    \claimproofstart
        Set $\theta = \alpha / (16t)$, and let $S_i \sim Y_\theta$ and $T_i \sim X_\theta$ be independent $\theta$-Bernoulli samples. For $x \in X_i$, set $d(x) = L \phi_i'(x) - \phi_i(x)$. Note that $L \mathbb{E} \phi_i'(x) = \phi_i(x)$. Thus
        $$
            \mathbb{E} d(x)^2 = L^2 \mathbb{E} \phi_i'(x)^2 - \phi_i(x)^2.
        $$
        For each $x\in X_i$, let $\cE_x$ denote the edges $h \in \cG$ counted by $\phi_i(x)$. Then
        $$
            \mathbb{E} \phi_i'(x)^2 \le \sum_{(h, h') \in \cE_x \times \cE_x} \theta^{|h \cup h'|-1}.
        $$
        By splitting the sum depending on whether $h \cap h' = \{x\}$ or not, we conclude
        $$
            \mathbb{E}  \sum_{x \in X_i} \phi_i'(x)^2 \le \theta^{2(k - 1)} \sum_{x \in X_i} \phi_i(x)^2 + D_\cG(X).
        $$
        Here each ordered edge pair in the error meets $V_i$ in at most one vertex, so it is counted at most once. Therefore,
        $$
            \mathbb{E} \sum_{x \in X_i} d(x)^2 \le L^2 D_\cG(X).
        $$      
        By the Cauchy--Schwarz inequality, this implies
        $$
            \mathbb{E} \sum_{x \in X_i} |L \phi_i'(x) - \phi_i(x)| \le L \sqrt{|X| D_\cG(X)}.
        $$
        By Markov's inequality, with positive probability we have
        $$
            \sum_{x \in X_i} |L \phi_i'(x) - \phi_i(x)| \le 4L \sqrt{|X| D_\cG(X)} \quad \text{ and } \quad |S_i|, |T_i| \le 4 \theta |X|.
        $$
        Therefore, the desired sets $S_i$ and $T_i$ exist.
    \claimproofend

    If $Y_i=\emptyset$, take $C_i'=\emptyset$, $X_i'=Y_i'=\emptyset$, and $J_i=\emptyset$. Suppose this is not the case. Extend $\phi_i'$ by zero to $V\setminus V_i$, and apply Lemma \ref{lemma:approx_full} with $\phi_i'$, $X$, the subset $Y_i$ (as $Y$), and the family $\{A_j\cap V_i\}_{j\in I}$. Let $X_i'\subseteq X$, $Y_i'\subseteq Y_i$, and $J_i\subseteq I$ be the corresponding sets, and let $C_i' := C(\phi_i', X_i', Y_i', J_i) \cap V_i$. Then
    $$
        |X_i'|+|Y_i'|\le\frac{\alpha|X|}{2t},\qquad |J_i|\le h,
    $$
    and the set $C_i'$ satisfies
    $$
        \Bigl||Y_i\cap A|-|C_i'\cap X_i\cap A|\Bigr|\le\frac{\varepsilon|X|}{t}
        \quad\text{for every $A\in\cA$},
    $$
    and
    $$
        |\phi_i'(Y_i)-\phi_i'(C_i'\cap X_i)|
        \le B\left(\sum_{x\in X_i}\phi_i'(x)^2\right)^{1/2}.
    $$
    Since $\phi_i'(x)\le\deg_{\cG[X]}(x)$ for $x\in X_i$, Cauchy--Schwarz gives
    \begin{align*}
        \sum_{x\in X_i}\phi_i'(x)^2
        &\le\frac{|X|}{(k-1)^2}
            \sum_{x\in X_i}\ \sum_{v\in X\setminus V_i}\deg_{\cG[X]}(x,v)^2\\
        &\le\frac{|X|}{k-1}D_\cG(X)\le |X|D_\cG(X).
    \end{align*}
    Indeed, $(k-1)\deg_{\cG[X]}(x)=\sum_{v\in X\setminus V_i}\deg_{\cG[X]}(x,v)$, and each ordered edge pair in the double sum has at least two common vertices and is counted at most $k-1$ times: it has at most one common vertex in $V_i$.
    Together with \eqref{eq:approx_phi}, this gives
    \begin{align*}
        |\phi_i(Y_i) - \phi_i(C_i' \cap X_i)| &\le |\phi_i(Y_i) - L \phi_i'(Y_i)| + |L\phi_i'(Y_i) - L\phi_i'(C_i' \cap X_i)| \\
        &\quad + |L \phi_i'(C_i' \cap X_i) - \phi_i(C_i' \cap X_i)| \\
        &\le 8L \sqrt{|X|D_\cG(X)} + LB \sqrt{|X|D_\cG(X)} \le \frac{Q\sqrt{|X|D_\cG(X)}}{t}.
    \end{align*}
    Set $C_i = C_{i+1} \cup C_i'$. Edges avoiding $V_i$ contribute equally to the two mixed counts, so 
    $$
        \phi_i(Y_i) - \phi_i(C_i' \cap X_i) = e_{\cG}^{(i+1)}(Y, C_{i+1} \cap X) - e_\cG^{(i)}(Y, C_i \cap X), 
    $$
    and the edge-count bound in \eqref{eq:induction} holds. The occupancy bounds follow by adding the error on $V_i$ to the induction hypothesis for $C_{i+1}$. 

    Set $C := C_1$. By \eqref{eq:induction}, all the required occupancy bounds hold. Note that $e_\cG^{(t+1)}(Y, C_{t+1} \cap X) = e_\cG(Y)$ and $e_\cG^{(1)}(Y, C_1 \cap X) = e_\cG(C \cap X)$. By telescoping and applying \eqref{eq:induction}, we conclude
    $$
        \Bigl| e_{\cG}(Y) - e_\cG(C \cap X) \Bigr| \le \sum_{i = 1}^{t} \left| e_\cG^{(i+1)}(Y, C_{i+1} \cap X) - e_{\cG}^{(i)}(Y, C_i \cap X) \right| \le Q \sqrt{|X|D_\cG(X)}.
    $$

    The recorded sets have total size at most $\alpha|X|$. We claim that $C$ can be obtained from 
    $$
        \mathbf F_X=(T_1,X_1',\ldots,T_t,X_t'), \quad
        \mathbf F_Y=(S_1,Y_1',\ldots,S_t,Y_t'), \quad
        \mathbf J=(J_1,\ldots,J_t).
    $$
    Starting with $C_{t+1}=\emptyset$, recover $C_i$ in descending order of $i$. The function $\phi_i'$ on $V_i$ is determined by $S_i$ and $T_i\cap C_{i+1}$; extend it by zero outside $V_i$. The construction in Lemma \ref{lemma:approx_full}, using $J_i,X_i',Y_i'$ and the fixed indexed family $\{A_j\cap V_i\}_{j\in I}$, then recovers $C_i'$ after intersection with $V_i$. If $Y_i'=\emptyset$, this construction returns the empty set, as required also in the case $Y_i=\emptyset$. Thus $C_i=C_{i+1}\cup C_i'$ is recovered at every step from $\mathbf F_X,\mathbf F_Y,$ and $\mathbf J$.
\end{proof}

\section{Proof of Theorem \ref{thm:main}}
\label{sec:proof_main}

We shall make use of Janson's well-known lower-tail inequality. In particular, for any hypergraph $\cF$ with $\mu_\cF(p)>0$ and any $0\le u\le\mu_\cF(p)$, Janson's inequality \cite[Theorem 1]{riordanwarnke2015} states
\begin{equation} \label{eq:janson}
    \Pr\bigl(e_\cF(V_p)\le\mu_\cF(p) - u \bigr) \le \exp\left(-\frac{u^2}{2\Delta_\cF(p)}\right).
\end{equation}

\begin{proof}[Proof of Theorem \ref{thm:main}]
Write $V=V(\cH)$. Let us first choose constants. Set $\beta=\min\{\varepsilon,1\}/16$ and $t=\lceil k^2/\beta\rceil$. Choose $\alpha\in(0,1]$ sufficiently small that
$$
    \alpha\le\beta/20,\qquad
    \sqrt{5kK\alpha}\le\beta/2,\qquad
    5\alpha\log\left(\frac{e2^{4t+1}}{5\alpha}\right)
        \le\frac{\beta^2}{64K}.
$$
Let $c_0,Q,h$ be as given by Lemma~\ref{lemma:partite} with parameters $k,t,\alpha$ and $\beta$ in place of $\varepsilon$. Choose $\delta>0$ such that
$$
    \delta\le\min\{c_0/4,\alpha/2\},\qquad 2\delta(th+1)\log2\le\frac{\beta^2}{64K}.
$$
Let $\gamma=\beta^2/(2KQ^2)$, and let $T$ be as given by Lemma~\ref{lemma:deletion} applied with $\alpha/2$ and $\gamma$. Enlarge $T$, if necessary, to ensure that all estimates below requiring sufficiently large $Np$ hold: this is possible because $q\ge1/N$ and $Np>T$. In particular, require $Np\ge1/\delta$ and $1+\delta Np\le c_0(1-\alpha)Np$. 

We may assume $\cA\ne\emptyset$, replacing an empty family by $\{V\}$ if necessary. Write $\cA=\{A_j\}_{j\in I}$, with $|I|=|\cA|$. Form a partition $V = V_1 \sqcup \ldots \sqcup V_t$ by assigning each vertex independently and uniformly to one of the parts. For $h \in \cH$, let $I_h$ be the indicator random variable for $|h \cap V_i| \ge 2$ for some $i \in [t]$. Then $\Pr(I_h=1) \le \binom{k}{2}/t$. Thus
$$
    \mathbb{E}\left[ \sum_{h \in \cH} I_h \right] \le \beta e(\cH).
$$
Therefore, with positive probability there exists $V = V_1 \sqcup \ldots \sqcup V_t$ such that all but at most $\beta e(\cH)$ edges $h \in \cH$ intersect each part at most once. Fix one such partition before sampling $X$, and let $\cG \subseteq \cH$ denote the hypergraph consisting of all transversal edges. The bound on $\Delta_\cH(q)$ and $p \ge q$ imply
\begin{equation} \label{eq:cor}
    \Delta_\cH(p)\le K\frac{\mu_\cH(p)^2}{Np} \quad \text{ and } \quad \sum_{v\in V}\deg_\cH(v)^2\le kK\frac{e(\cH)^2}{N}.
\end{equation}

For $\emptyset\ne Y'\subseteq X'\subseteq V$ with $|X'| \ge 1/c_0$ and $|I| < 2^{c_0|X'|}$, fix a choice $\Phi(Y',X')$ of the set $C$ supplied by Lemma \ref{lemma:partite}, applied to $\cG$ and $\cA$ with parameters $\alpha$ and $\beta$. Set $\Phi(Y',X')=\emptyset$ for all other pairs. 

\begin{samepage}
\begin{claim} \label{claim:cleaned_C}
    With probability at least $1 - e^{-cNp}$, we have $\bigl||X|-Np\bigr|\le\alpha Np/2$ and the following holds simultaneously: For every $Y' \subseteq X' \subseteq X$ with $|X'| \ge (1 - \alpha)Np$, the set $C = \Phi(Y', X')$ satisfies 
    \begin{enumerate}[label=\textit{(P\arabic*)}]
        \item \label{prop:C_X_size} $\bigl| |C\cap X\cap A|-p|C\cap A|\bigr|\le\beta Np$ for every $A\in\cA$;
        \item \label{prop:C_X_edges} $e_{\cG[C]}(X') \ge \mu_{\cG[C]}(p)  - \beta e(\cH) p^k$;
        \item \label{prop:C_X_edges_compl} $e_{\cG \setminus \cG[C]}(X') \ge \mu_{\cG \setminus \cG[C]}(p) -  \beta e(\cH) p^k$.
    \end{enumerate}
\end{claim}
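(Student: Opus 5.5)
The plan is a union bound over fingerprints, where each fingerprint is charged the probability that it lies inside $X$. The size bound $\bigl||X|-Np\bigr|\le\alpha Np/2$ fails with probability $\exp\{-\Omega(\alpha^2Np)\}$ by Chernoff, so we work on this event. On it, any $X'$ with $|X'|\ge(1-\alpha)Np$ satisfies $|X'|\ge 1/c_0$ and $|I|\le 2^{\delta Np}<2^{c_0|X'|}$, by the choice of $\delta$ and $Np$ large. Hence $\Phi(Y',X')$ is given by Lemma~\ref{lemma:partite} and is a function $C(\mathbf F_X,\mathbf F_Y,\mathbf J)$ of a fingerprint. Here $\mathbf F_X,\mathbf F_Y$ are $4t$ subsets of $X$ of total size at most $\alpha|X'|\le 2\alpha Np$, and $\mathbf J$ has total size at most $th$. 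Since $\mathbf F_Y\subseteq Y'\subseteq X$, all recorded sets lie in $X$. Let $F$ be their union, of size $f\le 2\alpha Np$.

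For a fixed fingerprint $(\mathbf F_X,\mathbf F_Y,\mathbf J)$ with union $F$, we bound the probability that $F\subseteq X$ and the resulting $C$ violates (P1), (P2) or (P3) for some $X'\subseteq X$ of size at least $(1-\alpha)Np$. The key device is to condition on $F\subseteq X$. Then $X\setminus F$ is distributed as $(V\setminus F)_p$, independent of $C$, which is determined before sampling. The conditioning factor is $p^{f}$.

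\begin{itemize}
\item \textbf{(P1).} For fixed $A$, Chernoff for $|C\cap A\cap (X\setminus F)|$ gives deviation $\beta Np/2$ with failure probability $\exp\{-\Omega(\beta^2Np)\}$. The shift from $F$ is at most $f\le 2\alpha Np\le\beta Np/2$. We then union bound over $|\cA|\le 2^{\delta Np}$ sets.
\item \textbf{(P2), (P3).} These must hold for every large $X'\subseteq X$, so we first handle $X$ itself and then deletions. By Janson \eqref{eq:janson} applied to $\cF=\cG[C]$ (and to $\cG\setminus\cG[C]$) with $u=\beta e(\cH)p^k/2$, and using $\Delta_\cF(p)\le\Delta_\cH(p)\le K\mu_\cH(p)^2/(Np)$, the failure probability is at most $\exp\{-\beta^2Np/(8K)\}$. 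Conditioning on $F\subseteq X$ only increases edge counts (monotonicity/Harris), so this bound still holds. Passing from $X$ to $X'$ deletes at most $\alpha Np+\alpha Np/2$ vertices. For this loss we need the deterministic event that every set $R$ of at most $2\alpha Np$ vertices lies in at most $\beta e(\cH)p^k/2$ edges of $\cH[X]$. To get it, apply Lemma~\ref{lemma:rodl_rucinski}-style reasoning, or more simply bound $\sum_{v\in R}\deg_{\cH[X]}(v)$ using the second bound in \eqref{eq:cor} and Cauchy--Schwarz. This gives roughly $\sqrt{|R|\cdot kK e(\cH)^2p^{2k-1}\cdot}\,$, i.e.\ of order $\sqrt{5kK\alpha}\,e(\cH)p^k\le \beta e(\cH)p^k/2$, which matches the constant condition on $\alpha$. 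It requires an upper tail on $\sum_v \deg_{\cH[X]}(v)^2$. I would obtain it via Lemma~\ref{lemma:rodl_rucinski} applied to pairs of edges sharing a vertex (sets of size $\le 2k-1$), after first deleting few vertices. This is the delicate point, and alternatively one absorbs it into the $X'$ flexibility.
\end{itemize}

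Finally sum over fingerprints. The number with $|F|=f$ is at most $\binom{N}{f}2^{4tf}\cdot 2^{\delta Np\cdot th}$. Multiplying by $p^f$ and summing over $f\le 2\alpha Np$ gives about $(eNp\,2^{4t}/f)^{f}$. This is at most $\exp\{5\alpha\log(e2^{4t+1}/(5\alpha))Np\}$, which by the choice of $\alpha$ is at most $\exp\{\beta^2Np/(64K)\}$; the $\mathbf J$ count contributes at most $\exp\{\beta^2Np/(64K)\}$ as well. Against the failure bound $\exp\{-\beta^2Np/(8K)\}$ this leaves $\exp\{-cNp\}$ with $c\approx\beta^2/(32K)$.

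The main obstacle is the uniformity over all $X'$ in (P2)/(P3). A Janson bound for the fixed $X$ is not enough, since deleting up to $\alpha Np$ vertices could destroy many edges. Controlling this requires the degree-concentration argument from \eqref{eq:cor} together with a deletion-type upper tail. I would try Cauchy--Schwarz with the Rödl--Ruciński lemma first.
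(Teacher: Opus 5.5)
Your fingerprint union bound, the conditioning on containment in $X$, the Chernoff treatment of \ref{prop:C_X_size} and the entropy count all match the paper. The gap is exactly the point you flag: uniformity over $X'$ in \ref{prop:C_X_edges} and \ref{prop:C_X_edges_compl}.

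Your reduction $e_{\cG[C]}(X')\ge e_{\cG[C]}(X)-|\{h\in\cH[X]:h\cap(X\setminus X')\ne\emptyset\}|$ needs the event that every $R\subseteq X$ with $|R|\le 2\alpha Np$ meets at most $\beta e(\cH)p^k/2$ edges of $\cH[X]$. This is an upper-tail event for $\cH[X]$, and it can fail with probability far larger than $e^{-cNp}$. Take the triangle hypergraph on $E(K_n)$ with $p=o(1)$, which satisfies \eqref{eq:Delta_hypothesis}. Planting a clique on $\Theta(np)$ vertices has probability $p^{\Theta(n^2p^2)}=e^{-o(Np)}$. Yet it creates a constant fraction of $e(\cH)p^3$ triangles supported on only $\Theta(n^2p^2)=o(Np)$ vertices of $\cH$.

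The R\"odl--Ruci\'nski fallback does not repair this. Lemma~\ref{lemma:rodl_rucinski} only controls $\cH[X\setminus R_0]$ for some $R_0$ that depends on $X$. To use it, you would need a Janson bound on $X\setminus R_0$ rather than on $X$, which already forces a union bound over $R_0$.

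The missing idea is to put the deleted set itself into the record. The paper fixes $R=X\setminus X'$ together with $\mathbf F_X,\mathbf F_Y,\mathbf J$. It lets $S$ be the union of $R$ and all recorded sets, so $|S|\le 5\alpha Np$, and conditions on $S\subseteq X$. It then applies Janson only to the edges of $\cG[C]$ (resp.\ $\cG\setminus\cG[C]$) that avoid $S$, on the $p$-random set $X\setminus S\subseteq X'$.

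The only loss is in the mean, and it is deterministic. By the second bound in \eqref{eq:cor} and Cauchy--Schwarz, at most $\sum_{v\in S}\deg_\cH(v)\le e(\cH)\sqrt{kK|S|/N}\le\beta e(\cH)/2$ edges of $\cH$ meet $S$. This is where $\sqrt{5kK\alpha}\le\beta/2$ enters, applied to $\deg_\cH$ rather than $\deg_{\cH[X]}$, so no upper tail is ever needed.

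The extra union bound over $R$ is affordable because $R\subseteq X$ and $|R|\le 3\alpha Np/2$. It is paid for by the factor $p^{|R|}$, exactly as for the fingerprint. This is why the paper's support budget is $5\alpha Np$ and its membership count per support vertex is $2^{4t+1}$ rather than $2^{4t}$.
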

\end{samepage}
\claimproofstart
Fix a set $R\subseteq V$, tuples $\mathbf F_X,\mathbf F_Y\in(2^V)^{2t}$, and $\mathbf J=(J_1,\ldots,J_t) \in (2^I)^t$ with $\sum_{i=1}^t|J_i|\le th$. Let $S$ be the union of $R$ and all the sets in $\mathbf F_X$ and $\mathbf F_Y$ (we call such $S$ the \emph{support}). Suppose $|S|\le5\alpha Np$, and condition on $S\subseteq X$. Let $C=C(\mathbf F_X,\mathbf F_Y,\mathbf J)$ be as given by Lemma \ref{lemma:partite}, setting $C=\emptyset$ for records for which this construction is not defined. By the second part of \eqref{eq:cor} and the Cauchy--Schwarz inequality, the number of edges of $\cH$ (and hence $\cG$) meeting $S$ is at most
$$
    \sum_{v\in S}\deg_\cH(v) \le e(\cH)\sqrt{kK|S|/N} \le e(\cH)\sqrt{5kK\alpha}\le\beta e(\cH)/2.
$$
Let $\cG_S=\{h\in\cG[C]:h\cap S=\emptyset\}$ and $\cG'_S=\{h\in\cG\setminus\cG[C]:h\cap S=\emptyset\}$. The previous bound implies
$$
    \mu_{\cG_S}(p) \ge \mu_{\cG[C]}(p) - \beta \mu_{\cH}(p) / 2 \quad \text{ and } \quad \mu_{\cG'_S}(p) \ge \mu_{\cG \setminus \cG[C]}(p)  - \beta \mu_{\cH}(p) / 2.
$$
The set $X\setminus S$ is a $p$-random subset of $V\setminus S$. If the claimed lower bound in \ref{prop:C_X_edges} or \ref{prop:C_X_edges_compl} is nonpositive, it holds deterministically. Otherwise, the corresponding remaining mean exceeds $\beta\mu_\cH(p)/2$, so Janson's inequality applies with $u=\beta\mu_\cH(p)/2$. Since $X\setminus S\subseteq X\setminus R$ and both remaining overlap sums are at most $\Delta_\cH(p)$, each of these properties, with $X'=X\setminus R$, has conditional failure probability at most
$$
    \exp\left(-\frac{\beta^2\mu_\cH(p)^2}{8\Delta_\cH(p)}\right) \le e^{-\beta^2Np/(8K)}.
$$
Moreover, for each $A\in\cA$, the variable $|C\cap A\cap(X\setminus S)|$ is binomial with mean $p|(C\cap A)\setminus S|$. Conditioning on $S\subseteq X$ shifts the mean of $|C\cap A\cap X|$ from $p|C\cap A|$ by at most $|S|\le\beta Np/2$. Chernoff's bound and a union bound over $\cA$ therefore give conditional failure probability at most $2|I|e^{-\beta^2Np/12}$ for \ref{prop:C_X_size}. The probability of any failure, conditional on $S\subseteq X$, is thus at most $4|I|e^{-\beta^2Np/(16K)}$.

For a support $S$ of size $s$, there are at most $2^{(4t+1)s}$ ways to specify membership in $\mathbf F_X$, $\mathbf F_Y$, and $R$. There are at most $(1+t|I|)^{th}\le t^{th}(1+|I|)^{th}$ choices for $\mathbf J$: encode the set of pairs $(i,j)$ with $j\in J_i$ by a list of length $th$ over $[t]\times I$, with an additional blank symbol. The probability that $S\subseteq X$ is $p^s$, so the union bound gives total failure probability at most
\begin{align*}
    &4t^{th}(1+|I|)^{th+1}e^{-\beta^2Np/(16K)}
        \sum_{s=0}^{\lfloor5\alpha Np\rfloor}
        \binom Ns p^s2^{(4t+1)s}\\
    &\quad\le4t^{th}(5\alpha Np+1)(1+|I|)^{th+1}
        \exp\left(-\frac{\beta^2Np}{16K}
            +5\alpha Np\log\frac{e2^{4t+1}}{5\alpha}\right)
        \le e^{-\beta^2Np/(64K)}
\end{align*}
for sufficiently large $Np$. Here we used $\binom Ns p^s\le(eNp/s)^s$ for $s>0$, the monotonicity of $s\log(e2^{4t+1}Np/s)$ on $0<s\le5\alpha Np$, and
$$
    \log(1+|I|)\le(1+\delta Np)\log2\le2\delta Np\log2.
$$
The two choices of $\alpha$ and $\delta$ bound the corresponding entropy terms by $\beta^2Np/(64K)$ each; the remaining constant and polynomial factors are absorbed for large $Np$. Thus the three bounds hold for every such $\mathbf F_X$, $\mathbf F_Y$, $\mathbf J$, and $R$. Finally, Chernoff's bound gives $\bigl||X|-Np\bigr|\le\alpha Np/2$ with probability at least $1 - 2e^{-\alpha^2Np/12}$. Decreasing $c$ absorbs these failure probabilities into $e^{-cNp}$ for sufficiently large $Np$.

Suppose that all these events hold. For any $Y'\subseteq X'\subseteq X$ as in the claim, take $R=X\setminus X'$ and, if $Y'\ne\emptyset$, let $\mathbf F_X$, $\mathbf F_Y$, and $\mathbf J$ be the records used to define $C=\Phi(Y',X')$ in Lemma \ref{lemma:partite}. If $Y'=\emptyset$, take every recorded set and each $J_i$ empty, so that $C=\emptyset$. Every recorded vertex lies in $X$. Then $|R|\le3\alpha Np/2$, $|X'|\le2Np$, and the support has size at most
$$
    |R|+\alpha|X'|\le7\alpha Np/2\le5\alpha Np.
$$
Therefore, the set $C$ satisfies the three properties.
\claimproofend

With probability at least $1-e^{-Np}-e^{-cNp}$, the set $X$ satisfies Lemma \ref{lemma:deletion} (with $\alpha/2$ as $\alpha$) and Claim \ref{claim:cleaned_C}. For sufficiently large $Np$, this is at least $1-e^{-cNp}$ after decreasing $c>0$. Fix such an $X$, and consider any $Y\subseteq X_0\subseteq X$ satisfying
$$
    |X_0|\ge(1-\delta)Np,\qquad
    e_\cH(X_0)\le(1+\varepsilon/4)e(\cH)p^k.
$$
Let $R\subseteq X$ be a set of size $|R|\le\alpha Np/2$ supplied by Lemma \ref{lemma:deletion}, and put $X'=X_0\setminus R$. By monotonicity,
$$
    D_\cG(X')\le D_\cH(X\setminus R)\le\gamma\Delta_\cH(p).
$$
Since $\delta\le\alpha/2$, we have
$$
    |X'|\ge(1-\delta-\alpha/2)Np\ge(1-\alpha)Np,
    \qquad |X'|\le2Np.
$$
Let $Y'=Y\cap X'$ and $C=\Phi(Y',X')$, with $C=\emptyset$ if $Y'=\emptyset$. Claim \ref{claim:cleaned_C} applies to this pair. We argue that $C$ has the desired properties.

Recall that, also trivially when $Y'=\emptyset$,
\begin{equation} \label{eq:recall}
    \begin{gathered}
        \Bigl||Y'\cap A|-|C\cap X'\cap A|\Bigr|\le\beta|X'|\quad(A\in\cA),\\
        \bigl|e_\cG(Y')-e_\cG(C\cap X')\bigr|
        \le Q\sqrt{|X'|D_\cG(X')}
        \le Q\sqrt{2K\gamma}\,e(\cH)p^k
        =\beta e(\cH)p^k.
    \end{gathered}
\end{equation}
We have $|Y\setminus Y'|\le|R|\le\alpha Np/2$ and
$$
    |X\setminus X'|\le|X|-|X_0|+|R|\le(\delta+\alpha)Np.
$$
Together with \ref{prop:C_X_size}, for every $A\in\cA$ this gives
\begin{align*}
    \Bigl||Y\cap A|-p|C\cap A|\Bigr|
    &\le \Bigl||Y\cap A|-|Y'\cap A|\Bigr|
        +\Bigl||Y'\cap A|-|C\cap X'\cap A|\Bigr|\\
    &\quad+\Bigl||C\cap X'\cap A|-|C\cap X\cap A|\Bigr|
        +\Bigl||C\cap X\cap A|-p|C\cap A|\Bigr|\\
    &\le(\delta+3\alpha/2)Np+\beta|X'|+\beta Np
        \le(2\alpha+3\beta)Np\le\varepsilon Np.
\end{align*}
We now bound $e_\cH(Y)$ in terms of $e_\cH(C)p^k$. First, 
$$
    e_\cG(Y') \ge e_\cG(C \cap X') - \beta e(\cH) p^k \stackrel{\ref{prop:C_X_edges}}{\ge} e_\cG(C) p^k - 2\beta  e(\cH) p^k \ge e_\cH(C) p^k - 3 \beta e(\cH) p^k.
$$
Since $e_\cH(Y) \ge e_\cG(Y')$, this proves the desired lower bound. To bound $e_\cH(Y)$ from above, the crucial observation is
$$
    e_\cH(X_0) - e_\cH(Y) \ge e_\cG(X') - e_\cG(Y').
$$
Rearranging and using $e_\cG(Y') \le e_\cG(C \cap X') + \beta e(\cH) p^k$, we get
\begin{align*}
    e_\cH(Y) &\le e_\cG(Y') - e_\cG(X') + e_\cH(X_0)\\
    &\le e_\cG(C \cap X') - e_\cG(X') + e_\cH(X_0) + \beta e(\cH) p^k \\
    &\stackrel{\ref{prop:C_X_edges_compl}}{\le} e_\cG(C) p^k - e(\cG) p^k + e_\cH(X_0) + 2 \beta e(\cH) p^k \\
    &\le e_\cH(C) p^k + (\varepsilon/4+3\beta)e(\cH) p^k.
\end{align*}
Since $3\beta\le\varepsilon$ and $\varepsilon/4+3\beta\le\varepsilon$, both edge-count errors have the required size. The argument holds simultaneously for every such pair $Y\subseteq X_0$, on the same event of probability at least $1-e^{-cNp}$.
\end{proof}

\section{Example application: two-sided counting lemma}
\label{sec:example}

To demonstrate the use of Theorem \ref{thm:main}, we derive the two-sided $H$-counting lemma for subgraphs of random graphs. This is also one of the major results of Allen et al.~\cite{allen26dense}. We derive it from Theorem \ref{thm:main} and a union bound, without using the additional ``subcounts'' supplied by \cite[Theorem 2.4]{allen26dense} (see \cite[Section 15.4]{allen26dense} for a discussion).

For a graph $G$ and disjoint nonempty sets $W_1,W_2 \subseteq V(G)$, let 
$$
    d_G(W_1,W_2) = \frac{e_G(W_1,W_2)}{|W_1||W_2|}
$$
denote the density of the induced bipartite graph $G[W_1,W_2]$. For $\varepsilon > 0$ and $p \in [0,1]$, the pair $(W_1,W_2)$ is \emph{$(\varepsilon,p)$-regular} in $G$ if
$$
    |d_G(W'_1,W'_2)-d_G(W_1,W_2)|\le\varepsilon p
$$
for all $W'_i\subseteq W_i$ of size $|W'_i|\ge\varepsilon|W_i|$ (for $i \in \{1,2\}$). We say that a pair is $\varepsilon$-regular if it is $(\varepsilon, 1)$-regular.

For a graph $H$ on the vertex set $[v]$ and pairwise disjoint subsets $V_1,\ldots,V_v \subseteq V(G)$, let $N_H(G;V_1,\ldots,V_v)$ count the tuples $(x_1,\ldots,x_v)\in V_1\times\cdots\times V_v$ such that $x_ix_j\in E(G)$ for every $ij\in E(H)$. These are the \emph{canonical copies} of $H$. Finally, set
$$
    m_2(H)=\max \left\{\frac{e(F)-1}{v(F)-2}:F\subseteq H,\ v(F)\ge3\right\}.
$$

Conlon, Gowers, Samotij, and Schacht \cite{conlon2014klr} referred to the following result as the ``K\L R conjecture in random graphs'' and proved its two-sided form for strictly $2$-balanced graphs. The result in full generality was proved by Allen et al.~\cite{allen26dense}. For its significance and numerous implications, we refer the reader to \cite{conlon2014klr,gerke_steger_2005}. 

\begin{theorem} \label{thm:counting_klr}
    Let $H$ be a graph on the vertex set $[v]$ with $m_2(H) \ge 1$. For every $\delta,c > 0$ and $\eta > 0$, there exist $\varepsilon = \varepsilon(\delta,c) > 0$ and $B > 0$ such that the following holds. If $p\ge Bn^{-1/m_2(H)}$, then with high probability $\Gamma\sim G(n,p)$ has the following property simultaneously for every $G\subseteq\Gamma$ and all pairwise disjoint $V_1,\ldots,V_v\subseteq[n]$ with $|V_i|\ge\eta n$: if, for every $ij\in E(H)$, the pair $(V_i,V_j)$ is $(\varepsilon,p)$-regular in $G$ and $d_G(V_i,V_j)\ge c p$, then
    $$
        N_H(G;V_1,\ldots,V_v)
        =(1\pm\delta)\left(\prod_{i=1}^v|V_i|\right)
            \prod_{ij\in E(H)}d_G(V_i,V_j).
    $$
\end{theorem}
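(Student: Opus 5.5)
The plan is to reduce Theorem~\ref{thm:counting_klr} to Theorem~\ref{thm:main}, applied once for each tuple $\mathbf V=(V_1,\ldots,V_v)$, and to finish with a union bound over tuples. The union bound is affordable because there are at most $(v+1)^n=2^{O(n)}$ tuples, while each failure probability will be $e^{-cNp}$ with $Np=\Theta(n^2p)\gg n$.

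\textbf{Step 1: the hypergraph and the parameter $q$.} Fix $\mathbf V$. Let $\cH_{\mathbf V}$ be the $e(H)$-uniform hypergraph on $E(K_n)$ whose edges are the edge sets of the canonical copies of $H$ in $V_1\times\cdots\times V_v$. Then $N=\binom n2$ and $e_{\cH_{\mathbf V}}(Y)=N_H(G;\mathbf V)$ when $Y=E(G)$. Note that $k=e(H)\ge 2$, because $m_2(H)\ge1$ forces $H$ to contain a path with two edges.

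Take $q=B'n^{-1/m_2(H)}$. To verify \eqref{eq:Delta_hypothesis}, split $\Delta_{\cH_{\mathbf V}}(q)$ according to the intersection $F=h\cap h'$. Since $|V_i|\ge\eta n$, this gives
$$
    \frac{\Delta_{\cH_{\mathbf V}}(q)}{\mu_{\cH_{\mathbf V}}(q)^2}\lesssim_{\eta,H}\sum_{\emptyset\ne F\subseteq H}\frac{1}{n^{v(F)}q^{e(F)}}.
$$
A single edge $F$ contributes $\Theta(1/(n^2q))=\Theta(1/(Nq))$. For $e(F)\ge2$, the definition of $m_2(H)$ gives $n^{v(F)-2}q^{e(F)-1}\ge 1$, so these terms are also $O(1/(Nq))$. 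Hence $K$ depends only on $H$ and $\eta$, and the tuple $\mathbf V$ does not matter. Finally take $p\ge Tq$, which is achieved by choosing $B$ large.

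\textbf{Step 2: choice of $\cA$ and transfer to a dense model.} Let $\cA$ consist of all sets $E_{K_n}(W_1,W_2)$ with $W_1\subseteq V_i$ and $W_2\subseteq V_j$ for $ij\in E(H)$. Then $|\cA|\le 4^{n}\cdot v^2\le 2^{\delta Np}$ for large $n$.

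Theorem~\ref{thm:main} then yields a graph $D$ on $[n]$, corresponding to $C$, with the following properties.
\begin{enumerate}[label=(\roman*)]
    \item $e_G(W_1,W_2)=p\,e_D(W_1,W_2)\pm\varepsilon' n^2p$ for all such $W_1,W_2$.
    \item $N_H(G;\mathbf V)=p^{e(H)}N_H(D;\mathbf V)\pm\varepsilon' N_H(K_n;\mathbf V)p^{e(H)}$.
\end{enumerate}
From (i), each pair $(V_i,V_j)$ with $ij\in E(H)$ is $O(\varepsilon/\eta^2+\varepsilon'/(\varepsilon\eta)^2)$-regular in $D$, with $d_D(V_i,V_j)=d_G(V_i,V_j)/p\pm\varepsilon'/\eta^2\ge c/2$. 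The dense counting lemma then gives
$$
    N_H(D;\mathbf V)=(1\pm\delta/2)\prod_i|V_i|\prod_{ij\in E(H)}d_D(V_i,V_j).
$$
Combining this with (ii), and absorbing the additive error using $d_G\ge cp$, gives the claim once $\varepsilon$ and $\varepsilon'$ are chosen small in terms of $\delta$, $c$ and $\eta$.

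\textbf{Step 3, the main obstacle: the hypotheses on $X'$.} We need a set $X'$ with $Y\subseteq X'\subseteq X$, $|X'|\ge(1-\delta)Np$, and $e_{\cH_{\mathbf V}}(X')\le(1+\varepsilon/4)\mu(p)$, and this must hold simultaneously for exponentially many tuples. Chebyshev's inequality is too weak for this, since its failure probability is only polynomially small.
\begin{itemize}
    \item \emph{First attempt.} Run the deletion argument as in the remark after Theorem~\ref{thm:main}. For each $\mathbf V$, with failure probability $e^{-\Omega(Np)}$, Lemma~\ref{lemma:rodl_rucinski} provides $R_{\mathbf V}$ with $|R_{\mathbf V}|\le\delta Np/2$ such that $X'=X\setminus R_{\mathbf V}$ has few canonical copies. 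We then apply Theorem~\ref{thm:main} to $Y=E(G)\setminus R_{\mathbf V}$.
    \item \emph{The cost of this.} The lower bound on $N_H(G;\mathbf V)$ survives, since removing edges only decreases the count. For the upper bound, we must control the copies in $G$ that use an edge of $R_{\mathbf V}$.
    \item \emph{How to control that cost.} Choose $R_{\mathbf V}$ greedily as the vertices lying in many copies. Then argue that the copies of $\Gamma$ meeting $R_{\mathbf V}$ number at most $(e_{\cH_{\mathbf V}}(X)-(1+\varepsilon/8)\mu)_+$ plus a small term. Bound this quantity separately, using the fact that $G\subseteq\Gamma$ has density at most $p(1+o(1))$ on large pairs by Chernoff's inequality, and compare through the same dense model.
\end{itemize}
If this direct route fails, the fallback is to extract the upper bound from the lower bound by complementation. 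Apply the lower bound also to the graphs $G_S=(\Gamma\setminus G)\cup G$, restricted so that a prescribed subset $S\subseteq E(H)$ of the edges of $H$ is taken from $\Gamma\setminus G$. Then use inclusion--exclusion together with the total count $N_H(\Gamma;\mathbf V)$, which only needs typicality of $\Gamma$ itself.
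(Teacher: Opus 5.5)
\textbf{Gap: the upper bound in Step 3.} Your Steps 1 and 2 follow the paper: canonical-copy hypergraphs $\cH_{\mathbf V}$, a family $\cA$ of bipartite edge sets, a union bound over the at most $(v+1)^n$ tuples, and transfer of regularity and density to the dense model $D$. Your deletion-based lower bound is also sound, since removing $R_{\mathbf V}$ only decreases $N_H(G;\mathbf V)$ and changes each $|Y\cap A|$ by at most $|R_{\mathbf V}|$.

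The upper bound is not established. Both of your routes ultimately need $N_H(\Gamma;\mathbf V)=e_{\cH_{\mathbf V}}(X)\le(1+\theta)\mu_{\cH_{\mathbf V}}(p)$ for every admissible $\mathbf V$ simultaneously, and neither supplies it.
\begin{itemize}
\item In the first route, the copies through $R_{\mathbf V}$ are few only if $e_{\cH_{\mathbf V}}(X)$ is already known to be close to its mean. Your dense model describes $X\setminus R_{\mathbf V}$, not the copies meeting $R_{\mathbf V}$. Density $(1+o(1))p$ of $\Gamma$ on large pairs does not bound its $H$-count either; that is the upper half of the very counting lemma you are proving. For instance, with $H=K_4$ and $p=\Theta(n^{-2/5})$, a planted clique on $\Theta(n^{2/5})$ vertices spread over the $V_i$ leaves all large-pair densities essentially unchanged. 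Yet it adds a constant fraction of $\mu_{\cH_{\mathbf V}}(p)$ extra canonical copies.
\item In the fallback, you assert that the bound on $N_H(\Gamma;\mathbf V)$ ``only needs typicality of $\Gamma$''. But this is the same per-tuple upper-tail estimate that you yourself noted Chebyshev cannot give for exponentially many tuples. It is not $2^{-\Omega(n)}$ in general: the planted clique above has probability $\exp(-O(n^{4/5}\log n))$. Moreover, if you had this bound you could take $X'=X$ directly, and the inclusion--exclusion would be superfluous.
\end{itemize}

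\textbf{The missing idea.} Ask for an upper bound on a single random variable: the total number $e_\cH(X)$ of all copies of $H$ in $\Gamma$, where $\cH\supseteq\cH_{\mathbf V}$ is the hypergraph of all copies. Chebyshev with the overlap estimate gives $e_\cH(X)\le(1+\theta)e(\cH)p^{e(H)}$ with probability $1-o(1)$, and no union bound is needed for this one event.

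For each $\mathbf V$, Janson's inequality bounds the lower tail of the complementary count, $e_{\cH\setminus\cH_{\mathbf V}}(X)\ge(e(\cH)-e(\cH_{\mathbf V})-\theta e(\cH))p^{e(H)}$. Its failure probability is $\exp(-\Omega(\theta^2Np))$, which survives the union bound over tuples because $Np\gg n$.

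Subtracting gives $e_{\cH_{\mathbf V}}(X)\le e(\cH_{\mathbf V})p^{e(H)}+2\theta e(\cH)p^{e(H)}$. Since $e(\cH_{\mathbf V})\ge\eta^v e(\cH)$, this is at most $(1+\tau/8)e(\cH_{\mathbf V})p^{e(H)}$ once $\theta$ is small in terms of $\tau$ and $\eta$. So $X'=X$ is admissible for every $\mathbf V$, and Theorem~\ref{thm:main} applied to $Y=E(G)$ gives both bounds at once, with no deletion needed.

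\textbf{A separate quantifier problem.} Your regularity transfer should give $\varepsilon+O(\varepsilon'/(\varepsilon\eta)^2)$, not $O(\varepsilon/\eta^2+\cdots)$. The $(\varepsilon,p)$-regularity of $G$ contributes just $\varepsilon$ after dividing by $p$. As written, you choose $\varepsilon$ depending on $\eta$, which contradicts $\varepsilon=\varepsilon(\delta,c)$ in the statement. Only $\varepsilon'$ should depend on $\eta$.
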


Note that it is crucial here that $\varepsilon$ does not depend on $\eta$. The striking feature of this result is that regularity of the edge distribution between the vertex classes controls the number of $H$-copies.

When $m_2(H)>1$, the order of the lower bound on $p$ is necessary. Indeed, if $np\to\infty$ and $p=o(n^{-1/m_2(H)})$, then a typical $\Gamma\sim G(n,p)$ can be made $H$-free by removing $o(n^2p)$ edges: delete one edge from each copy of a subgraph attaining $m_2(H)$. Let $G$ be the resulting graph. Chernoff's inequality shows that the pairs between fixed disjoint vertex classes of linear size in $\Gamma$ are $(\varepsilon/2,p)$-regular and have density $(1+o(1))p$. Removing $o(n^2p)$ edges leaves these pairs $(\varepsilon,p)$-regular with densities $(1+o(1))p$. Thus, for any fixed $0<c<1$, they satisfy the regularity and density assumptions, whereas $N_H(G;V_1,\ldots,V_v)=0$.

\begin{proof}[Proof of Theorem \ref{thm:counting_klr}]
    We may assume that $H$ has no isolated vertices, since each isolated vertex contributes a factor $|V_i|$ to both sides of the conclusion. We may also assume $\delta,c\le1$ and $\eta\le1/v$.

    Choose $\varepsilon\in(0,1/2]$ sufficiently small for the usual dense counting lemma, in its form with unequal vertex-class sizes (see, for example, \cite[Lemma 1.3]{conlon2014klr}), to apply with regularity parameter $2\varepsilon$ and additive counting error $\delta c^{e(H)}/2$. That is, given any $D \subseteq K_n$ and disjoint nonempty $V_1, \ldots, V_v \subseteq [n]$ such that, for every $ij \in E(H)$, the pair $(V_i, V_j)$ is $2\varepsilon$-regular in $D$, we have
    \begin{equation} \label{eq:dense_counting}
        N_H(D;V_1,\ldots,V_v) =\left(\prod_{ij\in E(H)}d_D(V_i,V_j) \pm\frac{\delta c^{e(H)}}2\right)\prod_{i=1}^v|V_i|.
    \end{equation}
     
    Let $\cH$ be the $e(H)$-uniform hypergraph on $E(K_n)$ whose hyperedges are the edge sets of copies of $H$. Let $N=\binom n2$ and $q=4n^{-1/m_2(H)}$. By the assumption $m_2(H)\ge1$ we have $q > 1/N$. For $p\ge q$, counting intersecting pairs of copies according to their common vertices and edges gives
    \begin{equation} \label{eq:klr_overlap}
        \frac{\Delta_\cH(p)}{\mu_\cH(p)^2}
        = O_H\left(\sum_{\substack{F\subseteq H\\e(F)\ge1}} n^{-v(F)}p^{-e(F)}\right)
        \le\frac{K}{Np},
    \end{equation}
    where $K\ge1$ depends only on $H$. Indeed, an overlap with $v(F)$ vertices and $e(F)$ edges contributes $O_H(n^{2v-v(F)}p^{2e(H)-e(F)})$, while $e(\cH)=\Theta_H(n^v)$. Moreover,
    $$
        n^{v(F)-2}p^{e(F)-1}\ge n^{v(F)-2}q^{e(F)-1}\ge1
    $$
    by the definition of $m_2(H)$ when $v(F)\ge3$, and directly when $F$ is a single edge. Thus each summand in \eqref{eq:klr_overlap} is at most $1/(n^2p)$. 

    Set
    $$
        \tau := \min\left\{\frac{\varepsilon^3\eta^2}{2},
            \frac{\delta c^{e(H)}}{2(1+e(H)2^{e(H)-1}/\eta^2)}\right\}.
    $$
    For each tuple $\mathbf{V}=(V_1,\ldots,V_v)$ of disjoint subsets of $[n]$ with $|V_i|\ge\eta n$, let $\cH_{\mathbf{V}}\subseteq\cH$ consist of the canonical copies of $H$ on these classes, retaining $V(\cH_{\mathbf{V}}) = V(\cH)$ even if some vertices are isolated. Then
    $$
        e(\cH_{\mathbf{V}})=\prod_{i=1}^v|V_i| \ge\eta^v n^v\ge\lambda e(\cH), 
    $$
    where $\lambda=\eta^v$. Consequently,
    $$
        \Delta_{\cH_{\mathbf{V}}}(q)\le\Delta_\cH(q) \stackrel{\eqref{eq:klr_overlap}}{\le} K\lambda^{-2}\frac{\mu_{\cH_{\mathbf{V}}}(q)^2}{Nq}.
    $$
    Let $\cA$ consist of all sets $E_{K_n}(U,W)$ with $U,W\subseteq[n]$ disjoint. Note that $|\cA|\le3^n$.

    Apply Theorem \ref{thm:main} to each $\cH_{\mathbf{V}}$, with $\tau$ in place of $\varepsilon$, $K\lambda^{-2}$ in place of $K$, and family $\cA$. Let $c_0,\delta_0,T$ be the corresponding constants. Taking $B$ sufficiently large ensures $p>Tq$ and $|\cA|\le3^n\le2^{\delta_0Np}$. As there are at most $(v + 1)^n$ admissible $v$-tuples $\mathbf{V}$ and $Np > Bn/4$, a union bound (after increasing $B$ if necessary) shows that, with high probability, $X \sim V(\cH)_p$ satisfies the conclusion of Theorem \ref{thm:main} for every $\cH_{\mathbf{V}}$ simultaneously. 
    
    Next, we establish events which allow us to take $X' = X$ in the conclusion of Theorem \ref{thm:main}. Set $\theta=\tau\lambda/16$. Chebyshev's inequality gives
    \begin{equation} \label{eq:upper_bound}
        \Pr\bigl(e_\cH(X)\le (1+\theta)e(\cH)p^{e(H)}\bigr) \stackrel{\eqref{eq:klr_overlap}}{\ge} 1 - \frac{K}{\theta^2Np}=1-o(1).
    \end{equation}
    Note that it is crucial here that we are not asking for an upper bound on every $e_{\cH_{\mathbf{V}}}(X)$, but rather only on $e_\cH(X)$. This allows us to avoid taking a large union bound, and we shall shortly see that this is already sufficient. Furthermore, Chernoff's inequality gives $|X|\ge(1-\delta_0)Np$ with high probability.

    Finally, to obtain simultaneous upper bounds on the canonical counts, we combine \eqref{eq:upper_bound} with lower bounds on the complementary counts. For each admissible $\mathbf{V}$, Janson's inequality (see, e.g., \cite[Theorem 1]{riordanwarnke2015}) and \eqref{eq:klr_overlap} give
    \begin{equation} \label{eq:complement_janson}
        \Pr\left(e_{\cH\setminus\cH_{\mathbf{V}}}(X) \ge \bigl(e(\cH)-e(\cH_{\mathbf{V}})-\theta e(\cH)\bigr)p^{e(H)}\right)
        \ge 1 - \exp\left(-\frac{\theta^2Np}{2K}\right).
    \end{equation}
    If the threshold on the left is nonpositive, the event is certain; otherwise, we apply Janson's inequality with deviation $\theta e(\cH)p^{e(H)}$ and use $\Delta_{\cH\setminus\cH_{\mathbf{V}}}(p)\le\Delta_\cH(p)$. Again, by a union bound, taking $B$ sufficiently large ensures that with high probability this event holds simultaneously for every $\mathbf{V}$.
                
    Assume that all the stated events hold for $X \sim V(\cH)_p$. In particular, from \eqref{eq:upper_bound} and \eqref{eq:complement_janson} we conclude
    $$
        e_{\cH_{\mathbf{V}}}(X)
        \le e(\cH_{\mathbf{V}})p^{e(H)}+2\theta e(\cH)p^{e(H)}
        \le(1+\tau/8)e(\cH_{\mathbf{V}})p^{e(H)}.
    $$    
    Together with the lower bound on $|X|$, this allows us to take $X'=X$ in Theorem \ref{thm:main} for every $\cH_{\mathbf{V}}$.
    Let us identify $X$ with $\Gamma \subseteq K_n$, and note that $\Gamma \sim G(n,p)$. Consider some $G \subseteq \Gamma$ and an admissible tuple $\mathbf{V}$ satisfying the regularity and density assumptions, and let $Y \subseteq X$ be the set corresponding to $G$. The conclusion of Theorem \ref{thm:main} supplies a set $C \subseteq V(\cH)$, which corresponds to the graph $D\subseteq K_n$, such that
    \begin{equation} \label{eq:klr_model}
        \begin{gathered}
            \left|N_H(G;V_1,\ldots,V_v)-N_H(D;V_1,\ldots,V_v)p^{e(H)}\right| \le\tau p^{e(H)}\prod_{i=1}^v|V_i|,\\
            |e_G(U,W)-e_D(U,W)p|\le\tau Np \quad\text{for all disjoint $U,W\subseteq[n]$.}
        \end{gathered}
    \end{equation}
    The second property further implies
    \begin{equation} \label{eq:regularity}
        \left|\frac{d_G(U,W)}p-d_D(U,W)\right|
        \le \frac{\tau N}{|U||W|}.
    \end{equation}
    This holds for all disjoint nonempty $U,W\subseteq[n]$.
    
    For $ij\in E(H)$ and $U\subseteq V_i$, $W\subseteq V_j$ with $|U|\ge 2\varepsilon|V_i|$ and $|W|\ge 2\varepsilon|V_j|$, $(\varepsilon, p)$-regularity of $(V_i, V_j)$ in $G$ and \eqref{eq:regularity} give
    $$
        |d_D(U,W)-d_D(V_i,V_j)| \le \varepsilon+\frac{\tau}{4\varepsilon^2\eta^2} +\frac{\tau}{\eta^2}\le2\varepsilon.
    $$
    Thus every relevant pair $(V_i, V_j)$ is $2\varepsilon$-regular with respect to $D$. Moreover, \eqref{eq:regularity} gives
    $$
        \left|\frac{d_G(V_i,V_j)}p-d_D(V_i,V_j)\right|\le\frac{\tau}{\eta^2},
        \qquad \frac{d_G(V_i,V_j)}p\le1+\frac{\tau}{\eta^2}\le2.
    $$
    Since each factor is at most $2$, replacing the factors one at a time gives
    $$
        \left|\prod_{ij\in E(H)}d_D(V_i,V_j)
            -\prod_{ij\in E(H)}\frac{d_G(V_i,V_j)}p\right|
        \le e(H)2^{e(H)-1}\frac{\tau}{\eta^2}.
    $$
    By \eqref{eq:dense_counting} and \eqref{eq:klr_model}, we conclude
    \begin{align*}
        N_H(G; V_1, \ldots, V_v) &= N_H(D; V_1, \ldots, V_v) p^{e(H)} \pm  \tau p^{e(H)} \prod_{i = 1}^v |V_i| \\
        &= \left(\prod_{ij\in E(H)}d_D(V_i,V_j)
            \pm\left(\frac{\delta c^{e(H)}}2+\tau\right)\right)
            p^{e(H)}\prod_{i=1}^v|V_i|\\
        &= \left(\prod_{ij\in E(H)}d_G(V_i,V_j)
            \pm\delta c^{e(H)}p^{e(H)}\right)\prod_{i=1}^v|V_i|,
    \end{align*}
    where the last step follows from the choice of $\tau$, which ensures
    $$
        \frac{\delta c^{e(H)}}2+\tau+
            e(H)2^{e(H)-1}\frac{\tau}{\eta^2}\le\delta c^{e(H)}.
    $$
    Finally, $\prod_{ij\in E(H)}d_G(V_i,V_j)\ge c^{e(H)}p^{e(H)}$, which gives the desired bounds.
\end{proof}

\bibliographystyle{abbrv}
\bibliography{references}

\end{document}